\newtheorem{theorem}{Theorem}
\theoremstyle{plain}
\newtheorem{corollary}{Corollary}
\newtheorem{definition}{Definition}
\newtheorem{lemma}{Lemma}
\newtheorem{remark}{Remark}
\numberwithin{equation}{section}
\begin{document}
\title[]{Hermite-Hadamard type inequalities for harmonically $\left( \alpha
,m\right) $-convex functions via fractional integrals}
\author{Mehmet Kunt}
\address{Department of Mathematics, Faculty of Sciences, Karadeniz Technical
University, Trabzon, Turkey}
\email{mkunt@ktu.edu.tr}
\author{\.{I}mdat \.{I}\c{s}can}
\address{Department of Mathematics, Faculty of Sciences and Arts, Giresun
University, Giresun, Turkey}
\email{imdat.iscan@giresun.edu.tr}
\subjclass[2000]{Primary 26D15; Secondary 26A51}
\keywords{Hermite-Hadamard type inequalities, harmonically $%
\left(\alpha,m\right)$-convex functions, fractional integrals}

\begin{abstract}
In this paper, some Hermite-Hadamard type inequalities are established for
harmonically $\left(\alpha,m\right)$-convex functions via fractional
integrals and some Hermite-Hadamard type inequalities are obtained for these
classes of functions.
\end{abstract}

\maketitle

\section{introduction}

Let $f:I\subset\mathbb{R}\rightarrow\mathbb{R}$ be a convex function defined
on the interval $I$ of real numbers and $a,b\in I$ with $a<b$. The following
inequality is well known in the literature as Hermite-Hadamard integral
inequality for convex functions

\begin{equation}
f\left( \frac{a+b}{2}\right) \leq \frac{1}{b-a}\int_{a}^{b}f\left( x\right)
dx\leq \frac{f\left( a\right) +f\left( b\right) }{2}.  \label{1.1}
\end{equation}

Both inequalities hold in the reversed direction if $f$ is concave. Note
that, some of the classical inequalities for means can be obtained from
appropriate particular selections of the mapping $f$. For some results which
generalize, improve and extend the inequalities $\left( \ref{1.1}\right) $
we refer the reader to the recent paper \cite{[1]}-\cite{[5]} and references
therein.

In \cite{[1]}, \.{I}\c{s}can gave definition of harmonically convex
functions and established some Hermite-Hadamard type inequalities for
harmonically convex functions as follows:

\begin{definition}
\bigskip Let $I\subset 
\mathbb{R}
\backslash \left\{ 0\right\} $ be a \ real interval. A function $%
f:I\rightarrow 
\mathbb{R}
$ is said to be harmonically convex, if%
\begin{equation}
f\left( \frac{xy}{tx+\left( 1-t\right) y}\right) \leq tf\left( y\right)
+\left( 1-t\right) f\left( x\right)  \label{1.2}
\end{equation}%
for all $x,y\in I$ and $t\in \left[ 0,1\right] $. If the inequality in $%
\left( \ref{1.2}\right) $ is reversed, then $f$ is said to be harmonically
concave.
\end{definition}

\begin{theorem}
\cite{[1]}. Let $f:I\subset 
\mathbb{R}
\backslash \left\{ 0\right\} \rightarrow 
\mathbb{R}
$ be a harmonically convex function and $a,b\in I$ with $a<b$. If $f\in L%
\left[ a,b\right] $ then the following inequalities hold: 
\begin{equation*}
f\left( \frac{2ab}{a+b}\right) \leq \frac{ab}{b-a}\int_{a}^{b}\frac{f\left(
x\right) }{x^{2}}dx\leq \frac{f\left( a\right) +f\left( b\right) }{2}.
\end{equation*}
\end{theorem}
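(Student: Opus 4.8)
The plan is to imitate the classical proof of $\left(\ref{1.1}\right)$, replacing the linear change of variables by a ``harmonic'' one. First I would introduce the substitution $x=\dfrac{ab}{tb+\left( 1-t\right) a}$, equivalently $\dfrac{1}{x}=\dfrac{t}{a}+\dfrac{1-t}{b}$, for $t\in \left[ 0,1\right] $; note that the resulting points lie between $a$ and $b$, hence in $I$. Differentiating gives $\dfrac{dx}{x^{2}}=-\dfrac{b-a}{ab}\,dt$, and $x$ runs from $b$ to $a$ as $t$ runs from $0$ to $1$, so that
\begin{equation*}
\frac{ab}{b-a}\int_{a}^{b}\frac{f\left( x\right) }{x^{2}}\,dx=\int_{0}^{1}f\left( \frac{ab}{tb+\left( 1-t\right) a}\right) dt .
\end{equation*}
Replacing $t$ by $1-t$ shows this also equals $\int_{0}^{1}f\left( \frac{ab}{ta+\left( 1-t\right) b}\right) dt$; I will use both forms. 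The hypothesis $f\in L\left[ a,b\right] $ guarantees all these integrals are finite.

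For the right-hand inequality, I would apply the definition $\left(\ref{1.2}\right)$ of harmonic convexity with the roles of $x,y$ played by $b,a$, obtaining $f\left( \frac{ab}{tb+\left( 1-t\right) a}\right) \leq tf\left( a\right) +\left( 1-t\right) f\left( b\right) $ for each $t\in \left[ 0,1\right] $. Integrating over $t\in \left[ 0,1\right] $ and using the identity above yields $\frac{ab}{b-a}\int_{a}^{b}\frac{f\left( x\right) }{x^{2}}\,dx\leq \frac{f\left( a\right) +f\left( b\right) }{2}$.

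For the left-hand inequality, the key observation is that the harmonic mean is reproduced pointwise: writing $u_{t}=\dfrac{ab}{tb+\left( 1-t\right) a}$ and $v_{t}=\dfrac{ab}{ta+\left( 1-t\right) b}$, a short computation gives $\dfrac{2u_{t}v_{t}}{u_{t}+v_{t}}=\dfrac{2ab}{a+b}$ for every $t$, since the product $\left( tb+\left( 1-t\right) a\right)\left( ta+\left( 1-t\right) b\right)$ cancels. Applying $\left(\ref{1.2}\right)$ with parameter $\tfrac{1}{2}$ to the pair $u_{t},v_{t}$ gives $f\left( \frac{2ab}{a+b}\right) \leq \frac{1}{2}f\left( u_{t}\right) +\frac{1}{2}f\left( v_{t}\right) $; integrating over $t\in \left[ 0,1\right] $ and recognizing both resulting integrals as $\frac{ab}{b-a}\int_{a}^{b}\frac{f\left( x\right) }{x^{2}}\,dx$ completes the proof.

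I do not anticipate a genuine obstacle; the only points requiring care are the bookkeeping of the substitution (orientation of the limits and the sign coming from $d\left( 1/x\right) $) and choosing the correct assignment of $a,b$ to the variables in $\left(\ref{1.2}\right)$ so that the denominator $tb+\left( 1-t\right) a$ appears. Everything else is a routine integration of the two instances of the harmonic convexity inequality.
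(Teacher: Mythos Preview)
Your argument is correct: the harmonic substitution $1/x=t/a+(1-t)/b$ rewrites the weighted integral as $\int_{0}^{1}f\!\left(\frac{ab}{tb+(1-t)a}\right)dt$, after which the right inequality follows by integrating $\left(\ref{1.2}\right)$ with $(x,y)=(b,a)$ and the left by the midpoint trick $\frac{2u_{t}v_{t}}{u_{t}+v_{t}}=\frac{2ab}{a+b}$. This is exactly the standard proof from \cite{[1]}; note that the present paper does not supply its own proof of this theorem but merely quotes it as background, so there is nothing further to compare against here.
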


\begin{theorem}
\cite{[1]}. Let $f:I\subset \left( 0,\infty \right) \rightarrow 
\mathbb{R}
$ be a differentiable function on $I%
{{}^\circ}%
$, $a,b\in I$ with $a<b,$ and $f^{\prime }\in L\left[ a,b\right] $. If $%
\left\vert f^{\prime }\right\vert ^{q}$ is harmonically convex on $\left[ a,b%
\right] $ for $q\geq 1$, then%
\begin{equation*}
\left\vert \frac{f\left( a\right) +f\left( b\right) }{2}-\frac{ab}{b-a}%
\int_{a}^{b}\frac{f\left( x\right) }{x^{2}}dx\right\vert
\end{equation*}%
\begin{equation}
\leq \frac{ab\left( b-a\right) }{2}\lambda _{1}^{1-1/q}\left[ \lambda
_{2}\left\vert f^{\prime }\left( a\right) \right\vert ^{q}+\lambda
_{3}\left\vert f^{\prime }\left( b\right) \right\vert ^{q}\right] ^{1/q},
\label{1.3}
\end{equation}%
where%
\begin{eqnarray*}
\lambda _{1} &=&\frac{1}{ab}-\frac{2}{\left( b-a\right) ^{2}}\ln \left( 
\frac{\left( a+b\right) ^{2}}{4ab}\right) , \\
\lambda _{2} &=&\frac{-1}{b\left( b-a\right) }+\frac{3a+b}{\left( b-a\right)
^{3}}\ln \left( \frac{\left( a+b\right) ^{2}}{4ab}\right) , \\
\lambda _{3} &=&\frac{1}{a\left( b-a\right) }-\frac{3b+a}{\left( b-a\right)
^{3}}\ln \left( \frac{\left( a+b\right) ^{2}}{4ab}\right) =\lambda
_{1}-\lambda _{2}.
\end{eqnarray*}
\end{theorem}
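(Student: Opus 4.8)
The plan is to follow the now-standard three-step scheme for first-derivative Hermite--Hadamard estimates: produce an integral identity, bound it by the power-mean inequality, then insert harmonic convexity and evaluate elementary integrals. First I would record the representation
\[
\frac{f(a)+f(b)}{2}-\frac{ab}{b-a}\int_{a}^{b}\frac{f(x)}{x^{2}}\,dx
=\frac{ab(b-a)}{2}\int_{0}^{1}\frac{1-2t}{\left( tb+(1-t)a\right) ^{2}}\,f'\!\left( \frac{ab}{tb+(1-t)a}\right) dt .
\]
This is obtained by integrating the right-hand side by parts with $u=1-2t$, after noting that the substitution $x=\frac{ab}{tb+(1-t)a}$ (so $t=\frac{a(b-x)}{x(b-a)}$, $dx=\frac{ab(b-a)}{(tb+(1-t)a)^{2}}\,dt$) turns $\int_{0}^{1}\frac{1}{(tb+(1-t)a)^{2}}f'(\frac{ab}{tb+(1-t)a})\,dt$ into $\frac{f(b)-f(a)}{ab(b-a)}$ and the $t$-weighted analogue into the mean-value term. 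This is exactly the auxiliary identity underlying \cite{[1]}, so it may simply be quoted.

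Next, taking absolute values and using $\left|\int\right|\le\int|\cdot|$, and writing $g(t)=\dfrac{|1-2t|}{(tb+(1-t)a)^{2}}$, I would apply the power-mean inequality with exponent $q\ge 1$:
\[
\left| \frac{f(a)+f(b)}{2}-\frac{ab}{b-a}\int_{a}^{b}\frac{f(x)}{x^{2}}\,dx\right|
\le \frac{ab(b-a)}{2}\left( \int_{0}^{1}g(t)\,dt\right) ^{1-1/q}\left( \int_{0}^{1}g(t)\left| f'\!\left( \tfrac{ab}{tb+(1-t)a}\right) \right| ^{q}dt\right) ^{1/q}.
\]
Since $\frac{ab}{tb+(1-t)a}$ is the harmonic-type combination of $a$ and $b$ with weight $t$ on $a$, harmonic convexity of $|f'|^{q}$ gives $\left| f'\!\left( \tfrac{ab}{tb+(1-t)a}\right) \right| ^{q}\le t|f'(a)|^{q}+(1-t)|f'(b)|^{q}$, whence
\[
\int_{0}^{1}g(t)\left| f'\!\left( \tfrac{ab}{tb+(1-t)a}\right) \right| ^{q}dt\le |f'(a)|^{q}\int_{0}^{1}t\,g(t)\,dt+|f'(b)|^{q}\int_{0}^{1}(1-t)\,g(t)\,dt .
\]

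Putting $\lambda_{1}=\int_{0}^{1}g(t)\,dt$, $\lambda_{2}=\int_{0}^{1}t\,g(t)\,dt$ and $\lambda_{3}=\int_{0}^{1}(1-t)\,g(t)\,dt=\lambda_{1}-\lambda_{2}$, the last two displays combine into precisely the asserted inequality, provided these three integrals equal the stated closed forms. To evaluate them I would split each integral at $t=\tfrac12$ (where $1-2t$ changes sign) and substitute $u=tb+(1-t)a=a+t(b-a)$, reducing everything to integrals of $u^{-2}$ and $u^{-1}$ over $[a,\tfrac{a+b}{2}]$ and $[\tfrac{a+b}{2},b]$; the $u^{-1}$ contributions assemble into $\ln\frac{(a+b)/2}{a}+\ln\frac{(a+b)/2}{b}=\ln\frac{(a+b)^{2}}{4ab}$, and collecting the rational parts yields the $\frac{1}{ab}$, $\frac{1}{b(b-a)}$, $\frac{1}{a(b-a)}$ terms, matching $\lambda_{1},\lambda_{2},\lambda_{3}$.

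The conceptual steps are just the routine identity\,$+$\,power-mean\,$+$\,convexity pattern; the real work is the bookkeeping in the last step — tracking the sign of $1-2t$ across $t=\tfrac12$ and carrying the substitution and partial-fraction algebra carefully enough that the coefficients of $|f'(a)|^{q}$ and $|f'(b)|^{q}$ emerge with exactly the signs and logarithmic factors displayed in $\lambda_{2}$ and $\lambda_{3}$.
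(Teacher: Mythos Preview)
Your proposal is correct and follows exactly the approach that underlies the paper's treatment: the paper does not reprove Theorem~2 directly (it is quoted from \cite{[1]}) but recovers it in Remark~2 as the special case $\theta=\alpha=m=1$ of Theorem~6, whose proof uses the same identity (Lemma~2/equation~(1.7), which for $\theta=1$ is precisely your $1-2t$ identity), the same power-mean step, the same harmonic-convexity bound, and the same splitting at $t=\tfrac12$ to evaluate the resulting integrals. Your direct argument is just the $\theta=1$ instance of that proof with the hypergeometric expressions $C_4,C_5,C_6$ collapsing to the elementary closed forms $\lambda_1,\lambda_2,\lambda_3$.
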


\begin{theorem}
\cite{[1]}. Let $f:I\subset \left( 0,\infty \right) \rightarrow 
\mathbb{R}
$ be a differentiable function on $I%
{{}^\circ}%
$, $a,b\in I$ with $a<b,$ and $f^{\prime }\in L\left[ a,b\right] $. If $%
\left\vert f^{\prime }\right\vert ^{q}$ is harmonically convex on $\left[ a,b%
\right] $ for $q>1$, $\frac{1}{p}+\frac{1}{q}=1,$ then%
\begin{equation*}
\left\vert \frac{f\left( a\right) +f\left( b\right) }{2}-\frac{ab}{b-a}%
\int_{a}^{b}\frac{f\left( x\right) }{x^{2}}dx\right\vert
\end{equation*}%
\begin{equation}
\leq \frac{ab\left( b-a\right) }{2}\left( \frac{1}{p+1}\right) ^{1/p}\left[
\mu _{1}\left\vert f^{\prime }\left( a\right) \right\vert ^{q}+\mu
_{2}\left\vert f^{\prime }\left( b\right) \right\vert ^{q}\right] ^{1/q},
\label{1.4}
\end{equation}%
where%
\begin{eqnarray*}
\mu _{1} &=&\frac{\left[ a^{2-2q}+b^{1-2q}\left[ \left( b-a\right) \left(
1-2q\right) -a\right] \right] }{2\left( b-a\right) ^{2}\left( 1-q\right)
\left( 1-2q\right) }, \\
\mu _{2} &=&\frac{\left[ b^{2-2q}-a^{1-2q}\left[ \left( b-a\right) \left(
1-2q\right) +b\right] \right] }{2\left( b-a\right) ^{2}\left( 1-q\right)
\left( 1-2q\right) }.
\end{eqnarray*}
\end{theorem}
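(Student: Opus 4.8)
The plan is to reduce the estimate to an integral identity of \.{I}\c{s}can-type, namely
\begin{equation*}
\frac{f(a)+f(b)}{2}-\frac{ab}{b-a}\int_{a}^{b}\frac{f(x)}{x^{2}}\,dx
=\frac{ab(b-a)}{2}\int_{0}^{1}\frac{1-2t}{\bigl(tb+(1-t)a\bigr)^{2}}\,
f'\!\left(\frac{ab}{tb+(1-t)a}\right)dt ,
\end{equation*}
which is not contained in the excerpt and which I would establish first. Writing $D(t)=tb+(1-t)a$, one checks that $\frac{ab(b-a)}{D(t)^{2}}\,f'\!\bigl(\tfrac{ab}{D(t)}\bigr)=-\frac{d}{dt}f\!\bigl(\tfrac{ab}{D(t)}\bigr)$, so the right-hand side equals $-\tfrac12\int_{0}^{1}(1-2t)\,\frac{d}{dt}f\!\bigl(\tfrac{ab}{D(t)}\bigr)\,dt$; an integration by parts produces the boundary contribution $\tfrac12\bigl(f(a)+f(b)\bigr)$, and the substitution $x=\tfrac{ab}{D(t)}$ in the remaining integral produces $\frac{ab}{b-a}\int_{a}^{b}x^{-2}f(x)\,dx$.

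With this identity in hand, I would take absolute values, pull the positive constant $\frac{ab(b-a)}{2}$ out, and apply H\"older's inequality with exponents $p$ and $q$, treating $1-2t$ as one factor and $\dfrac{1}{D(t)^{2}}\bigl|f'(\tfrac{ab}{D(t)})\bigr|$ as the other:
\begin{equation*}
\left|\frac{f(a)+f(b)}{2}-\frac{ab}{b-a}\int_{a}^{b}\frac{f(x)}{x^{2}}\,dx\right|
\leq \frac{ab(b-a)}{2}\left(\int_{0}^{1}|1-2t|^{p}\,dt\right)^{\!1/p}
\left(\int_{0}^{1}\frac{\bigl|f'(\tfrac{ab}{D(t)})\bigr|^{q}}{D(t)^{2q}}\,dt\right)^{\!1/q}.
\end{equation*}
The first factor is elementary: by symmetry about $t=\tfrac12$, $\int_{0}^{1}|1-2t|^{p}\,dt=\frac{1}{p+1}$, which gives the stated $\bigl(\frac{1}{p+1}\bigr)^{1/p}$. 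For the second factor I would use that $|f'|^{q}$ is harmonically convex, so that with $x=b$, $y=a$ in the definition $\bigl|f'(\tfrac{ab}{D(t)})\bigr|^{q}\le t\,|f'(a)|^{q}+(1-t)\,|f'(b)|^{q}$, whence
\begin{equation*}
\int_{0}^{1}\frac{\bigl|f'(\tfrac{ab}{D(t)})\bigr|^{q}}{D(t)^{2q}}\,dt
\le |f'(a)|^{q}\int_{0}^{1}\frac{t\,dt}{D(t)^{2q}}
+|f'(b)|^{q}\int_{0}^{1}\frac{(1-t)\,dt}{D(t)^{2q}}.
\end{equation*}

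Finally, the substitution $u=D(t)=a+t(b-a)$ converts the two remaining integrals into $\frac{1}{(b-a)^{2}}\int_{a}^{b}\bigl(u^{1-2q}-a\,u^{-2q}\bigr)\,du$ and $\frac{1}{(b-a)^{2}}\int_{a}^{b}\bigl(b\,u^{-2q}-u^{1-2q}\bigr)\,du$; since $q>1$ keeps $1-q$ and $1-2q$ nonzero, these are ordinary power integrals, and after simplification they are exactly $\mu_{1}$ and $\mu_{2}$. Substituting back into the displayed chain gives the asserted inequality. I expect the only real labor to be this last algebraic reduction — bringing $\int_{a}^{b}(u^{1-2q}-a u^{-2q})\,du$ and its companion into the precise closed forms for $\mu_{1}$ and $\mu_{2}$ stated in the theorem — together with keeping the parametrization oriented so that $|f'(a)|^{q}$ is paired with $\mu_{1}$ and $|f'(b)|^{q}$ with $\mu_{2}$; there is no genuine analytic difficulty, the identity and the H\"older step being routine.
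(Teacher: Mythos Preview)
Your argument is correct and is essentially the route the paper takes. The paper does not prove this theorem directly (it is quoted from \cite{[1]}), but recovers it in Remark~3 as the case $\theta=\alpha=m=1$ of Theorem~8, whose proof is exactly your scheme: the identity you derive is the $\theta=1$ instance of Lemma~2/identity~(1.7); the H\"older split you use, with $|1-2t|$ on one side and $A_t^{-2}\,|f'(ab/A_t)|$ on the other, is the same split as in the proof of Theorem~8 (there $|1-2t|^{\theta}$ arises from Lemma~1, which is an equality when $\theta=1$); the harmonic-convexity bound $|f'(ab/A_t)|^{q}\le t|f'(a)|^{q}+(1-t)|f'(b)|^{q}$ is the $\alpha=m=1$ case of~(2.2); and your explicit evaluations of $\int_0^1 t\,A_t^{-2q}\,dt$ and $\int_0^1 (1-t)\,A_t^{-2q}\,dt$ are the elementary forms of the hypergeometric expressions~(2.19)--(2.20). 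The only cosmetic difference is that you compute $\mu_1,\mu_2$ directly via the substitution $u=A_t$, whereas the paper records them as ${}_2F_1$-values; your closed forms do match the stated $\mu_1,\mu_2$.
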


In \cite{[8]}, Miha\c{s}en gave definition of $\left( \alpha ,m\right) $%
-convex functions as follows:

\begin{definition}
The function $f:\left[ 0,b\right] \rightarrow 
\mathbb{R}
$, $b>0$, is said to be $\left( \alpha ,m\right) $-convex where $\left(
\alpha ,m\right) \in \left[ 0,1\right] ^{2}$, if we have%
\begin{equation*}
f\left( tx+m\left( 1-t\right) y\right) \leq t^{\alpha }f\left( x\right)
+m\left( 1-t^{\alpha }\right) f\left( y\right)
\end{equation*}%
for all $x,y\in \left[ 0,b\right] $ and $t\in \left[ 0,1\right] $.
\end{definition}

It can be easily that for $\left( \alpha ,m\right) \in \left\{ \left(
0,0\right) ,\left( \alpha ,0\right) ,\left( 1,0\right) ,\left( 1,m\right)
,\left( 1,1\right) ,\left( \alpha ,1\right) \right\} $ one obtains the
following classes of functions: increasing, $\alpha $-starshaped,
starshaped, $m$-convex, convex, $\alpha $-convex.

For recent results and generalizations concerning $\left( \alpha ,m\right) $%
-convex functions we refer the reader to paper \cite{[8]}-\cite{[12]} and
references therein.

In \cite{[6]}, \.{I}\c{s}can gave definition of harmonically $\left( \alpha
,m\right) $-convex functions as follows:

\begin{definition}
The function $f:\left( 0,b^{\ast }\right] \rightarrow 
\mathbb{R}
$, $b^{\ast }>0$, is said to be harmonically $\left( \alpha ,m\right) $%
-convex, where $\alpha \in \left[ 0,1\right] $ and $m\in \left( 0,1\right] $%
, if%
\begin{equation}
f\left( \frac{mxy}{mty+\left( 1-t\right) x}\right) \leq t^{\alpha }f\left(
x\right) +m\left( 1-t^{\alpha }\right) f\left( y\right)  \label{1.5}
\end{equation}%
for all $x,y\in \left( 0,b^{\ast }\right] $ and $t\in \left[ 0,1\right] $.
If the inequality in $\left( \ref{1.5}\right) $ is reversed, then $f$ is
said to be harmonically $\left( \alpha ,m\right) $-concave.
\end{definition}

Note that $\left( \alpha ,m\right) \in \left\{ \left( 1,m\right) ,\left(
1,1\right) ,\left( \alpha ,1\right) \right\} $ one obtains the following
classes of functions: harmonically $m$-convex, harmonically convex,
harmonically $\alpha $-convex (or harmonically $s$-convex in the first
sense, if we take $s$ instead of $\alpha $).

We recall the following inequality and special functions which are known as
Beta and hypergeometric function respectively%
\begin{equation*}
\beta \left( x,y\right) =\frac{\Gamma \left( x\right) \Gamma \left( y\right) 
}{\Gamma \left( x+y\right) }=\int_{0}^{1}t^{x-1}\left( 1-t\right) ^{y-1}dt,%
\text{ }x,y>0,
\end{equation*}

\begin{eqnarray*}
_{2}F_{1}\left( a,b;c;z\right) &=&\frac{1}{\beta \left( b,c-b\right) }%
\int_{0}^{1}t^{b-1}\left( 1-t\right) ^{c-b-1}\left( 1-zt\right) ^{-a}dt,%
\text{ } \\
c &>&b>0,\left\vert z\right\vert <1\left( \text{see \cite{[13]}}\right) .
\end{eqnarray*}

\begin{lemma}
\cite{[14]}, \cite{[15]}. For $0<\theta \leq 1$ and $0\leq a<b$ we have%
\begin{equation*}
\left\vert a^{\theta }-b^{\theta }\right\vert \leq \left( b-a\right)
^{\theta }.
\end{equation*}
\end{lemma}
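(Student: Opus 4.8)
The plan is to reduce the inequality to the subadditivity of the power function $t\mapsto t^{\theta}$ on $[0,\infty)$. Since $0\le a<b$ and $\theta>0$, the map $t\mapsto t^{\theta}$ is nondecreasing, so $a^{\theta}\le b^{\theta}$ and the modulus may be removed: it suffices to establish $b^{\theta}-a^{\theta}\le (b-a)^{\theta}$, i.e.\ $b^{\theta}\le a^{\theta}+(b-a)^{\theta}$.

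The crux is the claim that $(x+y)^{\theta}\le x^{\theta}+y^{\theta}$ for all $x,y\ge 0$ whenever $0<\theta\le 1$. I would prove this by normalization: the case $x=y=0$ is trivial, and otherwise, dividing by $(x+y)^{\theta}>0$ and writing $s=\frac{x}{x+y}\in[0,1]$ and $1-s=\frac{y}{x+y}$, the claim becomes $s^{\theta}+(1-s)^{\theta}\ge 1$. This in turn follows from the elementary fact that $u^{\theta}\ge u$ for $u\in[0,1]$ and $0<\theta\le 1$ (apply it to $u=s$ and to $u=1-s$ and add). That elementary fact is itself immediate, since $u^{\theta-1}\ge 1$ on $(0,1]$ when $\theta\le 1$, or alternatively from the concavity of $t\mapsto t^{\theta}$ together with $0^{\theta}=0$.

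Finally I would instantiate the subadditivity with $x=a\ge 0$ and $y=b-a>0$, so that $x+y=b$, obtaining $b^{\theta}\le a^{\theta}+(b-a)^{\theta}$, which is exactly what was needed. No step here poses a genuine obstacle; the only points demanding a little care are the degenerate case $a=0$ and the justification of the inequality $u^{\theta}\ge u$ on $[0,1]$, both of which are routine.
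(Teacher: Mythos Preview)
Your argument is correct and complete: the reduction to subadditivity of $t\mapsto t^{\theta}$ for $0<\theta\le 1$, the normalization to $s^{\theta}+(1-s)^{\theta}\ge 1$, and the elementary bound $u^{\theta}\ge u$ on $[0,1]$ are all valid, and the edge case $a=0$ is handled.

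Note that the paper does not actually give a proof of this lemma; it is quoted from the literature (references \cite{[14]} and \cite{[15]}) and used as a tool in the later theorems. So there is no approach in the paper to compare against, and your self-contained argument is a standard and entirely acceptable way to justify the statement.
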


Following definitions and mathematical preliminaries of fractional calculus
theory are used further in this paper.

\begin{definition}
Let $f\in L\left[ a,b\right] $. The Riemann-Liouville integrals $%
J_{a+}^{\theta }f$ and $J_{b-}^{\theta }f$ of order $\theta >0$ with $a\geq
0 $ are defined by%
\begin{equation*}
J_{a+}^{\theta }f\left( x\right) =\frac{1}{\Gamma \left( \theta \right) }%
\int_{a}^{x}\left( x-t\right) ^{\theta -1}f\left( t\right) dt,\text{ }x>a
\end{equation*}%
and 
\begin{equation*}
J_{b-}^{\theta }f\left( x\right) =\frac{1}{\Gamma \left( \theta \right) }%
\int_{x}^{b}\left( t-x\right) ^{\theta -1}f\left( t\right) dt,\text{ }x<b
\end{equation*}%
respectively, where $\Gamma $ is the Euler Gamma function defined by $\Gamma
\left( \theta \right) =\int_{0}^{\infty }e^{-t}t^{\theta -1}dt$ and $%
J_{a+}^{0}f\left( x\right) =J_{b-}^{0}f$ $\left( x\right) =f\left( x\right) $%
.
\end{definition}

Let $f:I\subset \left( 0,\infty \right) \rightarrow 
\mathbb{R}
$ be a differentiable function on $I%
{{}^\circ}%
$, throughout this paper we will take

\begin{eqnarray*}
I_{f}\left( g;\theta ,a,b\right) &=&\frac{f\left( a\right) +f\left( b\right) 
}{2}-\frac{\Gamma \left( \theta +1\right) }{2}\left( \frac{ab}{b-a}\right)
^{\theta } \\
&&\times \left\{ J_{1/a-}^{\theta }\left( f\circ g\right) \left( 1/b\right)
+J_{1/b+}^{\theta }\left( f\circ g\right) \left( 1/a\right) \right\} .
\end{eqnarray*}%
where $a,b\in I$ with $a<b$, $\theta >0$, $g\left( x\right) =1/x$.

In \cite{[7]}, the authors represented Hermite-Hadamard's inequalities for
harmonically convex functions in fractional integral forms as follows:

\begin{theorem}
Let $f:I\subset \left( 0,\infty \right) \rightarrow 
\mathbb{R}
$ be a function such that $f\in L\left[ a,b\right] $, where $a,b\in I$ with $%
a<b$. If $f$ is a harmonically convex function on $\left[ a,b\right] $, then
the following inequalities for fractional integrals hold:%
\begin{equation*}
f\left( \frac{2ab}{a+b}\right) \leq \frac{\Gamma \left( \theta +1\right) }{2}%
\left( \frac{ab}{b-a}\right) ^{\theta }\left\{ 
\begin{array}{c}
J_{1/a-}^{\theta }\left( f\circ g\right) \left( 1/b\right) \\ 
+J_{1/b+}^{\theta }\left( f\circ g\right) \left( 1/a\right)%
\end{array}%
\right\} \leq \frac{f\left( a\right) +f\left( b\right) }{2}
\end{equation*}%
with $\theta >0$.
\end{theorem}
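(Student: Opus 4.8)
The plan is to reduce both inequalities to elementary estimates over $[0,1]$ after first rewriting the two Riemann--Liouville integrals in one common form. In $J_{1/a-}^{\theta}(f\circ g)(1/b)=\frac{1}{\Gamma(\theta)}\int_{1/b}^{1/a}(t-1/b)^{\theta-1}f(1/t)\,dt$ and $J_{1/b+}^{\theta}(f\circ g)(1/a)=\frac{1}{\Gamma(\theta)}\int_{1/b}^{1/a}(1/a-t)^{\theta-1}f(1/t)\,dt$, I would substitute $t=\frac1b+s\,\frac{b-a}{ab}$ with $s\in[0,1]$, so that $dt=\frac{b-a}{ab}\,ds$, $t-\frac1b=s\,\frac{b-a}{ab}$, $\frac1a-t=(1-s)\,\frac{b-a}{ab}$, and $\frac1t=\frac{ab}{sb+(1-s)a}$. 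Using $\Gamma(\theta+1)=\theta\Gamma(\theta)$ this yields the key identity
\[
\frac{\Gamma(\theta+1)}{2}\Big(\frac{ab}{b-a}\Big)^{\theta}\Big\{J_{1/a-}^{\theta}(f\circ g)(1/b)+J_{1/b+}^{\theta}(f\circ g)(1/a)\Big\}=\frac{\theta}{2}\int_{0}^{1}\big[s^{\theta-1}+(1-s)^{\theta-1}\big]\,f\Big(\frac{ab}{sb+(1-s)a}\Big)\,ds .
\]
Since $\theta>0$, the weight $s^{\theta-1}+(1-s)^{\theta-1}$ is integrable and the change of variables merely rescales the (finite, by $f\in L[a,b]$) fractional integrals, so there is no convergence issue.

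For the upper estimate I would apply harmonic convexity of $f$ with $x=a$, $y=b$ and parameter $1-s$, which gives $f\big(\frac{ab}{sb+(1-s)a}\big)\le(1-s)f(b)+sf(a)$ for every $s\in[0,1]$. Substituting this bound into the identity above and evaluating $\int_0^1 s^{\theta}\,ds=\frac{1}{\theta+1}$ and $\int_0^1 s^{\theta-1}(1-s)\,ds=\beta(\theta,2)=\frac{1}{\theta(\theta+1)}$ (together with their mirror images under $s\mapsto1-s$) collapses the right-hand side to $\frac{f(a)+f(b)}{2}$, which is the desired upper bound.

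For the lower estimate, set $u=\frac{ab}{sb+(1-s)a}$ and $v=\frac{ab}{sa+(1-s)b}$; a direct computation gives $\frac{1}{2u}+\frac{1}{2v}=\frac{a+b}{2ab}$, hence $\frac{uv}{\frac12u+\frac12v}=\frac{2ab}{a+b}$. Harmonic convexity with parameter $\tfrac12$ then gives $f\big(\frac{2ab}{a+b}\big)\le\frac12 f(u)+\frac12 f(v)$ for all $s\in[0,1]$. Multiplying by $s^{\theta-1}$ and integrating over $[0,1]$, and observing that $s\mapsto1-s$ turns $\int_0^1 s^{\theta-1}f(v)\,ds$ into $\int_0^1(1-s)^{\theta-1}f(u)\,ds$ while $\int_0^1 s^{\theta-1}\,ds=\frac1\theta$, I get after multiplying by $\theta$ that $f\big(\frac{2ab}{a+b}\big)\le\frac{\theta}{2}\int_0^1[s^{\theta-1}+(1-s)^{\theta-1}]f(u)\,ds$, which by the identity of the first step is precisely the left inequality.

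The main obstacle is the bookkeeping in the first step: one has to choose the substitution so that the two one-sided fractional integrals recombine into the single symmetric kernel $s^{\theta-1}+(1-s)^{\theta-1}$ and so that the powers of $\frac{ab}{b-a}$ cancel exactly the Jacobian factors, leaving the clean Gamma prefactor $\theta/2$. Once that representation is secured, the two inequalities follow from the definition of harmonic convexity and routine Beta-function evaluations, exactly as in the classical proof of $(\ref{1.1})$.
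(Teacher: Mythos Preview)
Your argument is correct in every step: the substitution $t=\frac{1}{b}+s\,\frac{b-a}{ab}$ indeed collapses the two Riemann--Liouville integrals into the single representation $\frac{\theta}{2}\int_{0}^{1}\big[s^{\theta-1}+(1-s)^{\theta-1}\big]f\big(\frac{ab}{sb+(1-s)a}\big)\,ds$, and the upper and lower bounds then follow exactly as you describe from harmonic convexity at parameters $1-s$ and $\tfrac12$, respectively, together with the Beta evaluations $\int_0^1 s^{\theta}\,ds=\frac{1}{\theta+1}$ and $\int_0^1 s^{\theta-1}(1-s)\,ds=\frac{1}{\theta(\theta+1)}$.

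Note, however, that the present paper does \emph{not} give its own proof of this theorem: the result is merely quoted from \cite{[7]} as background, so there is no in-paper argument to compare against. Your proof is precisely the standard one that appears in \cite{[7]}; in particular the integral identity you derive and the two applications of harmonic convexity (pointwise for the right inequality, and at the harmonic midpoint combined with the symmetry $s\mapsto 1-s$ for the left inequality) are the same ingredients used there.
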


In \cite{[7]}, the authors gave the following identity for differentiable
functions.

\begin{lemma}
Let $f:I\subset \left( 0,\infty \right) \rightarrow 
\mathbb{R}
$ be a differentiable function on $I%
{{}^\circ}%
$ such that $f^{\prime }\in L\left[ a,b\right] $, where $a,b\in I$ with $a<b$%
. Then the following equality for fractional integrals holds:%
\begin{equation}
I_{f}\left( g;\theta ,a,b\right) =\frac{ab\left( b-a\right) }{2}\int_{0}^{1}%
\frac{t^{\theta }-\left( 1-t\right) ^{\theta }}{\left( ta+\left( 1-t\right)
b\right) ^{2}}f^{\prime }\left( \frac{ab}{ta+\left( 1-t\right) b}\right) dt
\label{1.6}
\end{equation}

\begin{remark}
The identity $\left( \ref{1.6}\right) $ is equal the following one%
\begin{equation}
I_{f}\left( g;\theta ,a,b\right) =\frac{ab\left( b-a\right) }{2}\int_{0}^{1}%
\frac{\left( 1-t\right) ^{\theta }-t^{\theta }}{\left( tb+\left( 1-t\right)
a\right) ^{2}}f^{\prime }\left( \frac{ab}{tb+\left( 1-t\right) a}\right) dt.
\label{1.7}
\end{equation}
\end{remark}
\end{lemma}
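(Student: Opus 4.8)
The plan is to establish $\left( \ref{1.6}\right) $ by starting from its right-hand side and running two integrations by parts followed by a change of variables, so as to recover the defining expression for $I_{f}\left( g;\theta ,a,b\right) $. First I would record the elementary identity
\begin{equation*}
\frac{d}{dt}\left[ f\left( \frac{ab}{ta+\left( 1-t\right) b}\right) \right] =
\frac{ab\left( b-a\right) }{\left( ta+\left( 1-t\right) b\right) ^{2}}
f^{\prime }\left( \frac{ab}{ta+\left( 1-t\right) b}\right) ,
\end{equation*}
which shows that the integrand weight $\left( ta+\left( 1-t\right) b\right) ^{-2}f^{\prime }\left( ab/\left( ta+\left( 1-t\right) b\right) \right) $ is exactly $\left[ ab\left( b-a\right) \right] ^{-1}$ times a total derivative (here the hypothesis $f^{\prime }\in L\left[ a,b\right] $ is what makes the integral meaningful). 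Writing the integral on the right of $\left( \ref{1.6}\right) $ as the difference of the $t^{\theta }$-weighted and the $\left( 1-t\right) ^{\theta }$-weighted pieces, I would integrate each piece by parts.

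In the $t^{\theta }$ piece the boundary term is $\left[ t^{\theta }f\left( ab/\left( ta+\left( 1-t\right) b\right) \right) \right] _{0}^{1}=f\left( b\right) $ (here $\theta >0$ kills the endpoint $t=0$, and $ab/\left( 1\cdot a+0\cdot b\right) =b$), and the remaining integral is $-\theta \int_{0}^{1}t^{\theta -1}f\left( ab/\left( ta+\left( 1-t\right) b\right) \right) dt$. In the $\left( 1-t\right) ^{\theta }$ piece the boundary term is $-f\left( a\right) $ and the remaining integral is $+\theta \int_{0}^{1}\left( 1-t\right) ^{\theta -1}f\left( ab/\left( ta+\left( 1-t\right) b\right) \right) dt$. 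Hence the right-hand side of $\left( \ref{1.6}\right) $ equals
\begin{equation*}
\frac{f\left( a\right) +f\left( b\right) }{2}-\frac{\theta }{2}
\int_{0}^{1}\left[ t^{\theta -1}+\left( 1-t\right) ^{\theta -1}\right]
f\left( \frac{ab}{ta+\left( 1-t\right) b}\right) dt .
\end{equation*}

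It remains to recognise the last integral as the fractional-integral term of $I_{f}$. For the $t^{\theta -1}$ summand I would use the substitution $u=\frac{ta+\left( 1-t\right) b}{ab}=\frac{t}{b}+\frac{1-t}{a}$, so that $ab/\left( ta+\left( 1-t\right) b\right) =1/u$, $t=\frac{ab}{b-a}\left( \frac{1}{a}-u\right) $, $dt=-\frac{ab}{b-a}du$, and the limits $t=0,1$ become $u=1/a,1/b$; this converts $\int_{0}^{1}t^{\theta -1}f\left( \cdot \right) dt$ into $\left( ab/\left( b-a\right) \right) ^{\theta }\int_{1/b}^{1/a}\left( 1/a-u\right) ^{\theta -1}\left( f\circ g\right) \left( u\right) du=\left( ab/\left( b-a\right) \right) ^{\theta }\Gamma \left( \theta \right) J_{1/b+}^{\theta }\left( f\circ g\right) \left( 1/a\right) $. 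For the $\left( 1-t\right) ^{\theta -1}$ summand I would first replace $t$ by $1-t$ (turning the argument into $ab/\left( tb+\left( 1-t\right) a\right) $) and then substitute $u=\frac{t}{a}+\frac{1-t}{b}$, which gives in the same way $\left( ab/\left( b-a\right) \right) ^{\theta }\Gamma \left( \theta \right) J_{1/a-}^{\theta }\left( f\circ g\right) \left( 1/b\right) $. Since $\theta \Gamma \left( \theta \right) =\Gamma \left( \theta +1\right) $, the displayed expression becomes precisely $I_{f}\left( g;\theta ,a,b\right) $, which proves $\left( \ref{1.6}\right) $. The identity $\left( \ref{1.7}\right) $ of the Remark then follows immediately by applying the change of variable $t\mapsto 1-t$ to $\left( \ref{1.6}\right) $, which interchanges $ta+\left( 1-t\right) b$ with $tb+\left( 1-t\right) a$ and replaces $t^{\theta }-\left( 1-t\right) ^{\theta }$ by its negative.

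The two integrations by parts and the substitutions are routine; the only point that needs care is the bookkeeping, namely making sure that the kernels $\left( 1/a-u\right) ^{\theta -1}$ and $\left( u-1/b\right) ^{\theta -1}$ produced by the two substitutions are matched to the correct right- and left-sided Riemann--Liouville operators $J_{1/b+}^{\theta }$ and $J_{1/a-}^{\theta }$, and that the orientation of the limits of integration is kept consistent throughout. I expect this matching, rather than any real difficulty, to be the main thing to get right.
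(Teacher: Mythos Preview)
Your argument is correct: the derivative identity, the two integrations by parts, and the substitutions $u=t/b+(1-t)/a$ (respectively $u=t/a+(1-t)/b$) indeed transform the right-hand side of \eqref{1.6} into the definition of $I_{f}(g;\theta,a,b)$, and the bookkeeping matching $(1/a-u)^{\theta-1}$ with $J_{1/b+}^{\theta}$ and $(u-1/b)^{\theta-1}$ with $J_{1/a-}^{\theta}$ is done correctly. The final remark that \eqref{1.7} follows from \eqref{1.6} by $t\mapsto 1-t$ is also right.

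Note, however, that the present paper does not give its own proof of this lemma: it is quoted verbatim from \cite{[7]}, where the identity is established. The standard derivation there proceeds along exactly the same lines you outline---integration by parts against the total derivative $\frac{d}{dt}f\bigl(ab/(ta+(1-t)b)\bigr)$ followed by the change of variable $u=t/b+(1-t)/a$ to recover the Riemann--Liouville kernels---so your approach is not merely correct but is essentially the canonical one.
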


Because of the wide application of Hermite-Hadamard type inequalities and
fractional integrals, many researchers extend their studies to
Hermite-Hadamard type inequalities involving fractional integrals. Recent
results for this area, we refer the reader to paper \cite{[7]}, \cite{[15]}-%
\cite{[18]} and references therein.

In this paper, we aimed to establish Hermite-Hadamard's inequalities for
harmonically $\left( \alpha ,m\right) $-convex functions via fractional
integrals. These results have some relations with \cite{[1]}.

\section{main results}

\begin{theorem}
Let $f:I\subset \left( 0,\infty \right) \rightarrow \mathbb{R}$ be a
differentiable function on $I{{}%
{{}^\circ}%
}$, $a,b/m\in I{{}%
{{}^\circ}%
}$ with $a<b$, $m\in \left( 0,1\right] $ and $f^{\prime }\in L\left[ a,b%
\right] $. If $\left\vert f^{\prime }\right\vert ^{q}$ is harmonically $%
\left( \alpha ,m\right) $-convex on $\left[ a,b/m\right] $ for some fixed $\
q\geq 1$,with $\alpha \in \left[ 0,1\right] $, then 
\begin{eqnarray}
\left\vert I_{f}\left( g;\theta ,a,b\right) \right\vert &\leq &\frac{%
ab\left( b-a\right) }{2}C_{1}^{1-1/q}\left( \theta ;a,b\right)  \notag \\
&&\times \left[ C_{2}\left( \theta ;\alpha ;a,b\right) \left\vert f^{\prime
}\left( a\right) \right\vert ^{q}+mC_{3}\left( \theta ;\alpha ;a,b\right)
\left\vert f^{\prime }\left( b/m\right) \right\vert ^{q}\right] ^{1/q}\text{
\ \ \ \ }  \label{2.1}
\end{eqnarray}%
where%
\begin{equation*}
C_{1}\left( \theta ;a,b\right) =\frac{b^{-2}}{\theta +1}\left[ 
\begin{array}{c}
\begin{array}{c}
_{2}F_{1}\left( 2,\theta +1;\theta +2;1-\frac{a}{b}\right)%
\end{array}
\\ 
+%
\begin{array}{c}
_{2}F_{1}\left( 2,1;\theta +2;1-\frac{a}{b}\right)%
\end{array}%
\end{array}%
\right] \text{,}
\end{equation*}%
\begin{equation*}
C_{2}\left( \theta ;\alpha ;a,b\right) =\left[ 
\begin{array}{c}
\frac{\beta \left( \theta +1,\alpha +1\right) }{b^{2}}%
\begin{array}{c}
_{2}F_{1}\left( 2,\theta +1;\theta +\alpha +2;1-\frac{a}{b}\right)%
\end{array}
\\ 
+\frac{b^{-2}}{\theta +\alpha +1}%
\begin{array}{c}
_{2}F_{1}\left( 2,1;\theta +\alpha +2;1-\frac{a}{b}\right)%
\end{array}%
\end{array}%
\right] \text{,}
\end{equation*}%
\begin{equation*}
C_{3}\left( \theta ;\alpha ;a,b\right) =C_{1}\left( \theta ;a,b\right)
-C_{2}\left( \theta ;\alpha ;a,b\right) \text{.}
\end{equation*}

\begin{proof}
Let $A_{t}=tb+\left( 1-t\right) a$, $B_{u}=ua+\left( 1-u\right) b$. Since $%
\left\vert f^{\prime }\right\vert ^{q}$ is harmonically $\left( \alpha
,m\right) $-convex, using $\left( \ref{1.5}\right) $%
\begin{eqnarray}
\left\vert f^{\prime }\left( \frac{ab}{A_{t}}\right) \right\vert ^{q}
&=&\left\vert f^{\prime }\left( \frac{ab}{tb+\left( 1-t\right) a}\right)
\right\vert ^{q}=\left\vert f^{\prime }\left( \frac{ma\left( b/m\right) }{%
mt\left( b/m\right) +\left( 1-t\right) a}\right) \right\vert ^{q}  \notag \\
&\leq &t^{\alpha }\left\vert f^{\prime }\left( a\right) \right\vert
^{q}+m\left( 1-t^{\alpha }\right) \left\vert f^{\prime }\left( b/m\right)
\right\vert ^{q}\text{.}  \label{2.2}
\end{eqnarray}%
\qquad \qquad

From $\left( \ref{1.7}\right) $, using the property of the modulus, the
power mean inequality and $\left( \ref{2.2}\right) $, we find%
\begin{eqnarray}
\left\vert I_{f}\left( g;\theta ,a,b\right) \right\vert &\leq &\frac{%
ab\left( b-a\right) }{2}\int_{0}^{1}\frac{\left\vert \left( 1-t\right)
^{\theta }-t^{\theta }\right\vert }{A_{t}^{2}}\left\vert f^{\prime }\left( 
\frac{ab}{A_{t}}\right) \right\vert dt  \notag \\
&\leq &\frac{ab\left( b-a\right) }{2}\int_{0}^{1}\frac{\left( 1-t\right)
^{\theta }+t^{\theta }}{A_{t}^{2}}\left\vert f^{\prime }\left( \frac{ab}{%
A_{t}}\right) \right\vert dt  \notag \\
&\leq &\frac{ab\left( b-a\right) }{2}\left( \int_{0}^{1}\frac{\left(
1-t\right) ^{\theta }+t^{\theta }}{A_{t}^{2}}dt\right) ^{1-1/q}  \notag \\
&&\times \left( \int_{0}^{1}\frac{\left( 1-t\right) ^{\theta }+t^{\theta }}{%
A_{t}^{2}}\left\vert f^{\prime }\left( \frac{ab}{A_{t}}\right) \right\vert
^{q}dt\right) ^{1/q}  \notag \\
&\leq &\frac{ab\left( b-a\right) }{2}\left( \int_{0}^{1}\frac{\left(
1-t\right) ^{\theta }+t^{\theta }}{A_{t}^{2}}dt\right) ^{1-1/q}  \notag \\
&&\times \left( 
\begin{array}{c}
\left( \int_{0}^{1}\frac{\left( 1-t\right) ^{\theta }+t^{\theta }}{A_{t}^{2}}%
t^{\alpha }dt\right) \left\vert f^{\prime }\left( a\right) \right\vert ^{q}
\\ 
+m\left( \int_{0}^{1}\frac{\left( 1-t\right) ^{\theta }+t^{\theta }}{%
A_{t}^{2}}\left( 1-t^{\alpha }\right) dt\right) \left\vert f^{\prime }\left(
b/m\right) \right\vert ^{q}%
\end{array}%
\right) ^{1/q}  \label{2.3}
\end{eqnarray}

calculating following integrals, we have%
\begin{eqnarray}
\int_{0}^{1}\frac{\left( 1-t\right) ^{\theta }+t^{\theta }}{A_{t}^{2}}dt
&=&\int_{0}^{1}\frac{u^{\theta }+\left( 1-u\right) ^{\theta }}{B_{u}^{2}}du 
\notag \\
&=&\frac{b^{-2}}{\theta +1}\left[ 
\begin{array}{c}
\begin{array}{c}
_{2}F_{1}\left( 2,\theta +1;\theta +2;1-\frac{a}{b}\right)%
\end{array}
\\ 
+%
\begin{array}{c}
_{2}F_{1}\left( 2,1;\theta +2;1-\frac{a}{b}\right)%
\end{array}%
\end{array}%
\right]  \notag \\
&=&C_{1}\left( \theta ;a,b\right)  \label{2.4}
\end{eqnarray}%
\begin{eqnarray}
\int_{0}^{1}\frac{\left( 1-t\right) ^{\theta }+t^{\theta }}{A_{t}^{2}}%
t^{\alpha }dt &=&\int_{0}^{1}\frac{u^{\theta }+\left( 1-u\right) ^{\theta }}{%
B_{u}^{2}}\left( 1-u\right) ^{\alpha }du  \notag \\
&=&\left[ 
\begin{array}{c}
\frac{\beta \left( \theta +1,\alpha +1\right) }{b^{2}}%
\begin{array}{c}
_{2}F_{1}\left( 2,\theta +1;\theta +\alpha +2;1-\frac{a}{b}\right)%
\end{array}
\\ 
+\frac{b^{-2}}{\theta +\alpha +1}%
\begin{array}{c}
_{2}F_{1}\left( 2,1;\theta +\alpha +2;1-\frac{a}{b}\right)%
\end{array}%
\end{array}%
\right]  \notag \\
&=&C_{2}\left( \theta ;\alpha ;a,b\right)  \label{2.5}
\end{eqnarray}%
\begin{eqnarray}
\int_{0}^{1}\frac{\left( 1-t\right) ^{\theta }+t^{\theta }}{A_{t}^{2}}\left(
1-t^{\alpha }\right) dt &=&\int_{0}^{1}\frac{\left( 1-t\right) ^{\theta
}+t^{\theta }}{A_{t}^{2}}dt-\int_{0}^{1}\frac{\left( 1-t\right) ^{\theta
}+t^{\theta }}{A_{t}^{2}}t^{\alpha }dt  \notag \\
&=&C_{1}\left( \theta ;a,b\right) -C_{2}\left( \theta ;\alpha ;a,b\right) 
\notag \\
&=&C_{3}\left( \theta ;\alpha ;a,b\right)  \label{2.6}
\end{eqnarray}

Thus, if we use $\left( \ref{2.4}\right) $-$\left( \ref{2.6}\right) $ in $%
\left( \ref{2.3}\right) $ we get the inequality of $\ \left( \ref{2.1}%
\right) $ and this completes the proof.\bigskip
\end{proof}
\end{theorem}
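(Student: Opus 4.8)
The plan is to start from the fractional-integral identity~$\left( \ref{1.7}\right) $ of the cited Lemma, which expresses $I_{f}\left( g;\theta ,a,b\right) $ as $\frac{ab\left( b-a\right) }{2}$ times an integral of $\frac{\left( 1-t\right) ^{\theta }-t^{\theta }}{A_{t}^{2}}f^{\prime }\left( \frac{ab}{A_{t}}\right) $ with $A_{t}=tb+\left( 1-t\right) a$. Applying the triangle inequality for integrals and the elementary bound $\left\vert \left( 1-t\right) ^{\theta }-t^{\theta }\right\vert \leq \left( 1-t\right) ^{\theta }+t^{\theta }$ replaces the signed kernel by a nonnegative one. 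Since $q\geq 1$, the next step is the power-mean (weighted Jensen) inequality with weight $\frac{\left( 1-t\right) ^{\theta }+t^{\theta }}{A_{t}^{2}}$, splitting the integral into $\left( \int_{0}^{1}\text{weight}\right) ^{1-1/q}$ times $\left( \int_{0}^{1}\text{weight}\cdot \left\vert f^{\prime }\left( ab/A_{t}\right) \right\vert ^{q}\right) ^{1/q}$.

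Next I would estimate the inner $q$-th power term using harmonic $\left( \alpha ,m\right) $-convexity. The key algebraic observation, recorded as~$\left( \ref{2.2}\right) $, is that $\frac{ab}{tb+\left( 1-t\right) a}$ can be rewritten as $\frac{m\,a\,(b/m)}{m t (b/m)+(1-t)a}$, so that Definition~$\left( \ref{1.5}\right) $ applied with $x=a$, $y=b/m$ gives $\left\vert f^{\prime }\left( ab/A_{t}\right) \right\vert ^{q}\leq t^{\alpha }\left\vert f^{\prime }(a)\right\vert ^{q}+m\left( 1-t^{\alpha }\right) \left\vert f^{\prime }(b/m)\right\vert ^{q}$. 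Plugging this bound in and separating the two resulting pieces yields the three integrals $\int_{0}^{1}\frac{\left( 1-t\right) ^{\theta }+t^{\theta }}{A_{t}^{2}}\,dt$, $\int_{0}^{1}\frac{\left( 1-t\right) ^{\theta }+t^{\theta }}{A_{t}^{2}}t^{\alpha }\,dt$, and their difference, which are precisely $C_{1}$, $C_{2}$, $C_{3}$.

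The remaining work is to evaluate these three integrals in closed form in terms of the Beta and Gauss hypergeometric functions. For each, I would substitute $u=1-t$ where convenient to symmetrize, split $\left( 1-t\right) ^{\theta }+t^{\theta }$ into its two summands, and recognize each summand as an integral of the form $\int_{0}^{1}u^{b-1}\left( 1-u\right) ^{c-b-1}\left( 1-zu\right) ^{-a}\,du$ after factoring $A_{t}^{2}=b^{2}\left( 1-\left( 1-\tfrac{a}{b}\right) t\right) ^{2}$; matching parameters with the Euler integral representation of ${}_{2}F_{1}$ quoted in the preliminaries (with $a=2$, $z=1-\tfrac{a}{b}$, and $b,c$ read off from the exponents of $t$ and $1-t$) produces the stated formulas for $C_{1}\left( \theta ;a,b\right) $ and $C_{2}\left( \theta ;\alpha ;a,b\right) $, and $C_{3}=C_{1}-C_{2}$ by~$\left( \ref{2.6}\right) $. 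Substituting~$\left( \ref{2.4}\right) $–$\left( \ref{2.6}\right) $ back into the power-mean estimate~$\left( \ref{2.3}\right) $ gives~$\left( \ref{2.1}\right) $.

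The main obstacle is the bookkeeping in the last step: one must be careful that the hypergeometric condition $c>b>0$ holds for all the parameter choices, and that the orientation of the substitution (whether $t^{\alpha }$ becomes $u^{\alpha }$ or $\left( 1-u\right) ^{\alpha }$ on each summand) is tracked correctly, since a mismatch there would swap the two ${}_{2}F_{1}$ factors inside $C_{2}$. Everything else—the triangle inequality, the kernel bound, the power-mean inequality, and the convexity substitution—is routine and parallels the proof of the corresponding classical result~$\left( \ref{1.3}\right) $.
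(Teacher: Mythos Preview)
Your proposal is correct and follows essentially the same route as the paper's proof: identity~(1.7), the crude bound $\lvert (1-t)^{\theta}-t^{\theta}\rvert\le (1-t)^{\theta}+t^{\theta}$, the power-mean inequality with that weight, the harmonic $(\alpha,m)$-convexity estimate~(2.2), and then the change of variable $u=1-t$ together with the Euler integral for ${}_{2}F_{1}$ to identify $C_{1},C_{2},C_{3}$. One small slip: the factorization $b^{2}\bigl(1-(1-\tfrac{a}{b})t\bigr)^{2}$ you wrote is actually $B_{u}^{2}$ (valid \emph{after} the substitution $u=1-t$), not $A_{t}^{2}=\bigl(a+t(b-a)\bigr)^{2}$ itself; just be sure to apply it only post-substitution and the bookkeeping matches the paper exactly.
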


\begin{corollary}
In Theorem 5,

\begin{enumerate}
\item[(a)] If we take $\alpha =1$, $m=1$ we have the following inequality
for harmonically convex functions:%
\begin{eqnarray*}
\left\vert I_{f}\left( g;\theta ,a,b\right) \right\vert &\leq &\frac{%
ab\left( b-a\right) }{2}C_{1}^{1-1/q}\left( \theta ;a,b\right) \\
&&\times \left[ C_{2}\left( \theta ;1;a,b\right) \left\vert f^{\prime
}\left( a\right) \right\vert ^{q}+C_{3}\left( \theta ;1;a,b\right)
\left\vert f^{\prime }\left( b\right) \right\vert ^{q}\right] ^{1/q}\text{,}
\end{eqnarray*}

\item[(b)] If we take $\alpha =1$ we have the following inequality for
harmonically $m$-convex functions:%
\begin{eqnarray*}
\left\vert I_{f}\left( g;\theta ,a,b\right) \right\vert &\leq &\frac{%
ab\left( b-a\right) }{2}C_{1}^{1-1/q}\left( \theta ;a,b\right) \\
&&\times \left[ C_{2}\left( \theta ;1;a,b\right) \left\vert f^{\prime
}\left( a\right) \right\vert ^{q}+mC_{3}\left( \theta ;1;a,b\right)
\left\vert f^{\prime }\left( b/m\right) \right\vert ^{q}\right] ^{1/q}\text{,%
}
\end{eqnarray*}

\item[(c)] If we take $m=1$ we have the following inequality for
harmonically $\alpha $-convex functions:%
\begin{eqnarray*}
\left\vert I_{f}\left( g;\theta ,a,b\right) \right\vert &\leq &\frac{%
ab\left( b-a\right) }{2}C_{1}^{1-1/q}\left( \theta ;a,b\right) \\
&&\times \left[ C_{2}\left( \theta ;\alpha ;a,b\right) \left\vert f^{\prime
}\left( a\right) \right\vert ^{q}+C_{3}\left( \theta ;\alpha ;a,b\right)
\left\vert f^{\prime }\left( b\right) \right\vert ^{q}\right] ^{1/q}\text{.}
\end{eqnarray*}
\end{enumerate}
\end{corollary}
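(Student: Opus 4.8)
The plan is to obtain all three parts of the corollary as direct specializations of Theorem~5; no new integral estimates or inequalities are needed. The key point is the class identification recorded immediately after the definition of harmonically $(\alpha,m)$-convexity: a harmonically $(1,1)$-convex function is precisely a harmonically convex function, a harmonically $(1,m)$-convex function is precisely a harmonically $m$-convex function, and a harmonically $(\alpha,1)$-convex function is precisely a harmonically $\alpha$-convex function. Hence in each case the hypothesis imposed on $\left\vert f^{\prime}\right\vert ^{q}$ is exactly the hypothesis of Theorem~5 for the corresponding choice of the pair $(\alpha,m)$, and the conclusion is read off from $\left( \ref{2.1}\right) $ after that substitution.

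For part (a) I would put $\alpha = 1$ and $m = 1$ in $\left( \ref{2.1}\right) $. Then $[a,b/m] = [a,b]$ (so the side condition $a,b/m\in I^{\circ}$ becomes simply $a,b\in I^{\circ}$), the coefficient $m$ in front of $C_{3}$ equals $1$, and $f^{\prime}(b/m) = f^{\prime}(b)$, so the right-hand side of $\left( \ref{2.1}\right) $ collapses to $\frac{ab(b-a)}{2}C_{1}^{1-1/q}(\theta;a,b)\left[ C_{2}(\theta;1;a,b)\left\vert f^{\prime}(a)\right\vert ^{q} + C_{3}(\theta;1;a,b)\left\vert f^{\prime}(b)\right\vert ^{q}\right] ^{1/q}$, which is the asserted bound. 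Part (b) is the same computation but with $m$ kept general: take $\alpha = 1$ only, so that $C_{2}$ and $C_{3}$ are evaluated at $\alpha = 1$ while the factor $m$ and the argument $b/m$ are retained verbatim from $\left( \ref{2.1}\right) $. Part (c) is the mirror image: take $m = 1$ only, so that $[a,b/m] = [a,b]$, the prefactor $m$ disappears and $f^{\prime}(b/m) = f^{\prime}(b)$, while $C_{2}$ and $C_{3}$ keep their dependence on the general $\alpha$.

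Because everything follows by substitution, there is essentially no obstacle here; the only point requiring a little care is the bookkeeping — tracking in each specialization which of $\alpha$, $m$, $b/m$ and $[a,b/m]$ simplify and which survive, and checking that the function-class names appearing in the statement of the corollary are exactly those attached to the triples $(1,1)$, $(1,m)$ and $(\alpha,1)$ in the remark following the definition of harmonically $(\alpha,m)$-convex functions. If desired, one may also note that $C_{3}(\theta;\alpha;a,b) = C_{1}(\theta;a,b) - C_{2}(\theta;\alpha;a,b) \geq 0$ already follows from $\left( \ref{2.6}\right) $, so that in all three parts the bracketed expression on the right is a genuine nonnegative combination of $\left\vert f^{\prime}(a)\right\vert ^{q}$ and $\left\vert f^{\prime}(b/m)\right\vert ^{q}$.
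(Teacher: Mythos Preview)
Your proposal is correct and matches the paper's approach: the paper provides no separate proof for this corollary because each part follows immediately from Theorem~5 by substituting the indicated values of $\alpha$ and $m$ into $\left(\ref{2.1}\right)$, together with the class identifications listed after Definition~3. Your bookkeeping of which quantities simplify under each specialization is accurate.
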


When $0<\theta \leq 1$, using Lemma 1 we shall give another result for
harmonically $\left( \alpha ,m\right) $-convex functions as follows:

\begin{theorem}
\bigskip Let $f:I\subset \left( 0,\infty \right) \rightarrow \mathbb{R}$ be
a differentiable function on $I{{}%
{{}^\circ}%
}$, $a,b/m\in I{{}%
{{}^\circ}%
}$ with $a<b$, $m\in \left( 0,1\right] $ and $f^{\prime }\in L\left[ a,b%
\right] $. If $\left\vert f^{\prime }\right\vert ^{q}$ is harmonically $%
\left( \alpha ,m\right) $-convex on $\left[ a,b/m\right] $ for some fixed $\
q\geq 1$,with $\alpha \in \left[ 0,1\right] $, then 
\begin{eqnarray}
\left\vert I_{f}\left( g;\theta ,a,b\right) \right\vert &\leq &\frac{%
ab\left( b-a\right) }{2}C_{4}^{1-1/q}\left( \theta ;a,b\right)  \notag \\
&&\times \left[ C_{5}\left( \theta ;\alpha ;a,b\right) \left\vert f^{\prime
}\left( a\right) \right\vert ^{q}+mC_{6}\left( \theta ;\alpha ;a,b\right)
\left\vert f^{\prime }\left( b/m\right) \right\vert ^{q}\right] ^{1/q}\text{
\ \ \ \ }  \label{2.7}
\end{eqnarray}%
where $0<\theta \leq 1$ and%
\begin{equation*}
C_{4}\left( \theta ;a,b\right) =\left[ 
\begin{array}{c}
\frac{b^{-2}}{\theta +1}%
\begin{array}{c}
_{2}F_{1}\left( 2,1;\theta +2;1-\frac{a}{b}\right)%
\end{array}
\\ 
-\frac{b^{-2}}{\theta +1}%
\begin{array}{c}
_{2}F_{1}\left( 2,\theta +1;\theta +2;1-\frac{a}{b}\right)%
\end{array}
\\ 
+\left( \frac{a+b}{2}\right) ^{-2}\frac{1}{\theta +1}%
\begin{array}{c}
_{2}F_{1}\left( 2,\theta +1;\theta +2;\frac{b-a}{b+a}\right)%
\end{array}%
\end{array}%
\right] \text{,}
\end{equation*}%
\begin{equation*}
C_{5}\left( \theta ;\alpha ;a,b\right) =\left[ 
\begin{array}{c}
\frac{b^{-2}}{\theta +\alpha +1}%
\begin{array}{c}
_{2}F_{1}\left( 2,1;\theta +\alpha +2;1-\frac{a}{b}\right)%
\end{array}
\\ 
-\frac{\beta \left( \theta +1,\alpha +1\right) }{b^{2}}%
\begin{array}{c}
_{2}F_{1}\left( 2,\theta +1;\theta +\alpha +2;1-\frac{a}{b}\right)%
\end{array}
\\ 
+\frac{\beta \left( \theta +1,\alpha +1\right) }{\left( a+b\right)
^{2}2^{\alpha -2}}%
\begin{array}{c}
_{2}F_{1}\left( 2,\theta +1;\theta +\alpha +2;\frac{b-a}{b+a}\right)%
\end{array}%
\end{array}%
\right] \text{,}
\end{equation*}
\end{theorem}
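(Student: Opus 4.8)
The plan is to repeat the argument of Theorem 5 while exploiting $0<\theta\le 1$ through Lemma 1, arranging the sharpening so that every integral that occurs is still of Euler type $\int_{0}^{1}s^{p-1}(1-s)^{r-p-1}(1-zs)^{-2}\,ds=\beta(p,r-p)\,{}_{2}F_{1}(2,p;r;z)$. Write $A_{t}=tb+(1-t)a$ and recall, as in $\left(\ref{2.2}\right)$, that harmonic $\left(\alpha,m\right)$-convexity of $\left\vert f'\right\vert^{q}$ gives $\left\vert f'(ab/A_{t})\right\vert^{q}\le t^{\alpha}\left\vert f'(a)\right\vert^{q}+m(1-t^{\alpha})\left\vert f'(b/m)\right\vert^{q}$.

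From $\left(\ref{1.7}\right)$ and the triangle inequality,
\[
\left\vert I_{f}(g;\theta,a,b)\right\vert\le\frac{ab(b-a)}{2}\int_{0}^{1}\frac{\left\vert(1-t)^{\theta}-t^{\theta}\right\vert}{A_{t}^{2}}\,\left\vert f'(ab/A_{t})\right\vert\,dt .
\]
Split the integral at $t=1/2$. On $\left[1/2,1\right]$ the kernel is $\dfrac{t^{\theta}-(1-t)^{\theta}}{A_{t}^{2}}\ge 0$; on $\left[0,1/2\right]$, writing $(1-t)^{\theta}-t^{\theta}=\left[t^{\theta}-(1-t)^{\theta}\right]+2\left[(1-t)^{\theta}-t^{\theta}\right]$ and bounding the last bracket by $(1-2t)^{\theta}$ via Lemma 1 (applied to the pair $\{t,1-t\}$) gives $(1-t)^{\theta}-t^{\theta}\le\left[t^{\theta}-(1-t)^{\theta}\right]+2(1-2t)^{\theta}$. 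Let $w$ be the function equal to $\dfrac{t^{\theta}-(1-t)^{\theta}}{A_{t}^{2}}$ on $\left[1/2,1\right]$ and to $\dfrac{t^{\theta}-(1-t)^{\theta}+2(1-2t)^{\theta}}{A_{t}^{2}}$ on $\left[0,1/2\right]$. Then $w$ dominates the kernel on $\left[0,1\right]$, and — again by Lemma 1, since $t^{\theta}-(1-t)^{\theta}+2(1-2t)^{\theta}\ge(1-2t)^{\theta}\ge 0$ on $\left[0,1/2\right]$ — we have $w\ge 0$ throughout $\left[0,1\right]$. Hence the power-mean inequality applies with the (nonnegative) weight $w$, and feeding in the convexity estimate yields
\[
\left\vert I_{f}\right\vert\le\frac{ab(b-a)}{2}\left(\int_{0}^{1}w\right)^{1-1/q}\left[\left(\int_{0}^{1}wt^{\alpha}\right)\left\vert f'(a)\right\vert^{q}+m\left(\int_{0}^{1}w(1-t^{\alpha})\right)\left\vert f'(b/m)\right\vert^{q}\right]^{1/q},
\]
so that $C_{4}=\int_{0}^{1}w$, $C_{5}=\int_{0}^{1}wt^{\alpha}$ and $C_{6}=\int_{0}^{1}w(1-t^{\alpha})=C_{4}-C_{5}$.

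It remains to evaluate $\int_{0}^{1}wt^{\nu}$ for $\nu\in\{0,\alpha\}$. By the definition of $w$,
\[
\int_{0}^{1}wt^{\nu}=\int_{0}^{1}\frac{t^{\theta+\nu}}{A_{t}^{2}}\,dt-\int_{0}^{1}\frac{t^{\nu}(1-t)^{\theta}}{A_{t}^{2}}\,dt+2\int_{0}^{1/2}\frac{t^{\nu}(1-2t)^{\theta}}{A_{t}^{2}}\,dt .
\]
For the first two integrals I would substitute $u=1-t$, use $A_{t}=b\bigl(1-(1-\tfrac{a}{b})u\bigr)$, and apply the Euler formula; this gives $\dfrac{b^{-2}}{\theta+\nu+1}\,{}_{2}F_{1}\bigl(2,1;\theta+\nu+2;1-\tfrac{a}{b}\bigr)$ and $\dfrac{\beta(\theta+1,\nu+1)}{b^{2}}\,{}_{2}F_{1}\bigl(2,\theta+1;\theta+\nu+2;1-\tfrac{a}{b}\bigr)$ (with $\beta(\theta+1,1)=\tfrac{1}{\theta+1}$ when $\nu=0$). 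For the last term substitute $t=\tfrac{1-s}{2}$, so that $1-2t=s$ and $A_{t}=\tfrac{1}{2}\bigl((a+b)-s(b-a)\bigr)$; extracting $\bigl(\tfrac{a+b}{2}\bigr)^{-2}$ together with the factor $2^{-\nu}$ coming from $t^{\nu}$ and applying the Euler formula,
\[
\int_{0}^{1/2}\frac{t^{\nu}(1-2t)^{\theta}}{A_{t}^{2}}\,dt=\frac{2^{1-\nu}\beta(\theta+1,\nu+1)}{(a+b)^{2}}\,{}_{2}F_{1}\Bigl(2,\theta+1;\theta+\nu+2;\tfrac{b-a}{b+a}\Bigr),
\]
so twice this quantity is the last term shown for $C_{4}$ (take $\nu=0$) and for $C_{5}$ (take $\nu=\alpha$). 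Collecting the three contributions reproduces $C_{4}(\theta;a,b)$, $C_{5}(\theta;\alpha;a,b)$ and $C_{6}=C_{4}-C_{5}$, and substituting into the bound above is $\left(\ref{2.7}\right)$.

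The only genuinely delicate part is this last computation — keeping the Beta and hypergeometric parameters correct through the two substitutions, and tracking the constants $b$, $\tfrac{a+b}{2}$, $2^{-\nu}$ generated by rescaling $A_{t}$ and $t^{\nu}$; that is also why $C_{4},C_{5}$ look so much heavier than $C_{1},C_{2}$. Everything else (the triangle inequality, the Lemma 1 rearrangement, the nonnegativity of $w$, the power-mean step, and the convexity estimate) is routine.
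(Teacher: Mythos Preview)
Your proof is correct and follows essentially the same route as the paper: start from $\left(\ref{1.7}\right)$, apply the power-mean inequality, use Lemma~1 to replace $(1-t)^{\theta}-t^{\theta}$ by $(1-2t)^{\theta}$ on $[0,1/2]$, and reduce the resulting integrals to Euler-type hypergeometric forms via the substitutions $u=1-t$ and $s=1-2t$. Your explicit introduction of the nonnegative dominating weight $w$ is in fact a cleaner presentation than the paper's, which bounds the three integrals separately and in $\left(\ref{2.11}\right)$ somewhat loosely writes the difference of two upper bounds as $C_{4}-C_{5}$; your framing makes that identity genuine.
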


\begin{equation*}
C_{6}\left( \theta ;\alpha ;a,b\right) =C_{4}\left( \theta ;a,b\right)
-C_{5}\left( \theta ;\alpha ;a,b\right) \text{.}
\end{equation*}

\begin{proof}
\bigskip Let $A_{t}=tb+\left( 1-t\right) a$, $B_{u}=ua+\left( 1-u\right) b$.
From $\left( \ref{1.7}\right) $, using the power mean inequality and $\left( %
\ref{2.2}\right) $, we find%
\begin{eqnarray}
\left\vert I_{f}\left( g;\theta ,a,b\right) \right\vert &\leq &\frac{%
ab\left( b-a\right) }{2}\int_{0}^{1}\frac{\left\vert \left( 1-t\right)
^{\theta }-t^{\theta }\right\vert }{A_{t}^{2}}\left\vert f^{\prime }\left( 
\frac{ab}{A_{t}}\right) \right\vert dt  \notag \\
&\leq &\frac{ab\left( b-a\right) }{2}\left( \int_{0}^{1}\frac{\left\vert
\left( 1-t\right) ^{\theta }-t^{\theta }\right\vert }{A_{t}^{2}}dt\right)
^{1-1/q}  \notag \\
&&\times \left( \int_{0}^{1}\frac{\left\vert \left( 1-t\right) ^{\theta
}-t^{\theta }\right\vert }{A_{t}^{2}}\left\vert f^{\prime }\left( \frac{ab}{%
A_{t}}\right) \right\vert ^{q}dt\right) ^{1/q}  \notag \\
&\leq &\frac{ab\left( b-a\right) }{2}\left( \int_{0}^{1}\frac{\left\vert
\left( 1-t\right) ^{\theta }-t^{\theta }\right\vert }{A_{t}^{2}}dt\right)
^{1-1/q}  \notag \\
&&\times \left( 
\begin{array}{c}
\left( \int_{0}^{1}\frac{\left\vert \left( 1-t\right) ^{\theta }-t^{\theta
}\right\vert }{A_{t}^{2}}t^{\alpha }dt\right) \left\vert f^{\prime }\left(
a\right) \right\vert ^{q} \\ 
+m\left( \int_{0}^{1}\frac{\left\vert \left( 1-t\right) ^{\theta }-t^{\theta
}\right\vert }{A_{t}^{2}}\left( 1-t^{\alpha }\right) dt\right) \left\vert
f^{\prime }\left( b/m\right) \right\vert ^{q}%
\end{array}%
\right) ^{1/q}  \label{2.8}
\end{eqnarray}%
calculating following integrals by Lemma 1, we have%
\begin{eqnarray}
\int_{0}^{1}\frac{\left\vert \left( 1-t\right) ^{\theta }-t^{\theta
}\right\vert }{A_{t}^{2}}dt &=&\int_{0}^{1/2}\frac{\left( 1-t\right)
^{\theta }-t^{\theta }}{A_{t}^{2}}dt+\int_{1/2}^{1}\frac{t^{\theta }-\left(
1-t\right) ^{\theta }}{A_{t}^{2}}dt  \notag \\
&=&\int_{0}^{1}\frac{t^{\theta }-\left( 1-t\right) ^{\theta }}{A_{t}^{2}}%
dt+2\int_{0}^{1/2}\frac{\left( 1-t\right) ^{\theta }-t^{\theta }}{A_{t}^{2}}%
dt  \notag \\
&\leq &\int_{0}^{1}\frac{t^{\theta }}{A_{t}^{2}}dt-\int_{0}^{1}\frac{\left(
1-t\right) ^{\theta }}{A_{t}^{2}}dt+2\int_{0}^{1/2}\frac{\left( 1-2t\right)
^{\theta }}{A_{t}^{2}}dt  \notag \\
&=&\int_{0}^{1}\frac{\left( 1-u\right) ^{\theta }}{B_{u}^{2}}du-\int_{0}^{1}%
\frac{u^{\theta }}{B_{u}^{2}}du+\int_{0}^{1}\frac{\left( 1-u\right) ^{\theta
}}{\left( \frac{u}{2}b+\left( 1-\frac{u}{2}\right) a\right) ^{2}}du  \notag
\\
&=&\int_{0}^{1}\frac{\left( 1-u\right) ^{\theta }}{B_{u}^{2}}du-\int_{0}^{1}%
\frac{u^{\theta }}{B_{u}^{2}}du  \notag \\
&&+\int_{0}^{1}v^{\theta }\left( \frac{a+b}{2}\right) ^{-2}\left( 1-v\left( 
\frac{b-a}{b+a}\right) \right) ^{-2}dv  \notag \\
&=&\left[ 
\begin{array}{c}
\frac{b^{-2}}{\theta +1}%
\begin{array}{c}
_{2}F_{1}\left( 2,1;\theta +2;1-\frac{a}{b}\right)%
\end{array}
\\ 
-\frac{b^{-2}}{\theta +1}%
\begin{array}{c}
_{2}F_{1}\left( 2,\theta +1;\theta +2;1-\frac{a}{b}\right)%
\end{array}
\\ 
+\left( \frac{a+b}{2}\right) ^{-2}\frac{1}{\theta +1}%
\begin{array}{c}
_{2}F_{1}\left( 2,\theta +1;\theta +2;\frac{b-a}{b+a}\right)%
\end{array}%
\end{array}%
\right]  \notag \\
&=&C_{4}\left( \theta ;a,b\right)  \label{2.9}
\end{eqnarray}%
and similarly we get%
\begin{eqnarray}
\int_{0}^{1}\frac{\left\vert \left( 1-t\right) ^{\theta }-t^{\theta
}\right\vert }{A_{t}^{2}}t^{\alpha }dt &\leq &\int_{0}^{1}\frac{t^{\theta
+\alpha }}{A_{t}^{2}}dt-\int_{0}^{1}\frac{\left( 1-t\right) ^{\theta
}t^{\alpha }}{A_{t}^{2}}dt+2\int_{0}^{1/2}\frac{\left( 1-2t\right) ^{\theta
}t^{\alpha }}{A_{t}^{2}}dt  \notag \\
&=&\int_{0}^{1}\frac{\left( 1-u\right) ^{\theta +\alpha }}{B_{u}^{2}}%
du-\int_{0}^{1}\frac{u^{\theta }\left( 1-u\right) ^{\alpha }}{B_{u}^{2}}du 
\notag \\
&&+\int_{0}^{1}\frac{\left( 1-u\right) ^{\theta }\left( \frac{u}{2}\right)
^{\alpha }}{\left( \frac{u}{2}b+\left( 1-\frac{u}{2}\right) a\right) ^{2}}du
\notag \\
&=&\int_{0}^{1}\frac{\left( 1-u\right) ^{\theta +\alpha }}{B_{u}^{2}}%
du-\int_{0}^{1}\frac{u^{\theta }\left( 1-u\right) ^{\alpha }}{B_{u}^{2}}du 
\notag \\
&&+\frac{\left( \frac{a+b}{2}\right) ^{-2}}{2^{\alpha }}\int_{0}^{1}v^{%
\theta }\left( 1-v\right) ^{\alpha }\left( 1-v\left( \frac{b-a}{b+a}\right)
\right) ^{-2}dv  \notag \\
&=&\left[ 
\begin{array}{c}
\frac{b^{-2}}{\theta +\alpha +1}%
\begin{array}{c}
_{2}F_{1}\left( 2,1;\theta +\alpha +2;1-\frac{a}{b}\right)%
\end{array}
\\ 
-\frac{\beta \left( \theta +1,\alpha +1\right) }{b^{2}}%
\begin{array}{c}
_{2}F_{1}\left( 2,\theta +1;\theta +\alpha +2;1-\frac{a}{b}\right)%
\end{array}
\\ 
+\frac{\beta \left( \theta +1,\alpha +1\right) }{\left( a+b\right)
^{2}2^{\alpha -2}}%
\begin{array}{c}
_{2}F_{1}\left( 2,\theta +1;\theta +\alpha +2;\frac{b-a}{b+a}\right)%
\end{array}%
\end{array}%
\right]  \notag \\
&=&C_{5}\left( \theta ;\alpha ;a,b\right)  \label{2.10}
\end{eqnarray}%
\begin{eqnarray}
\int_{0}^{1}\frac{\left\vert \left( 1-t\right) ^{\theta }-t^{\theta
}\right\vert }{A_{t}^{2}}\left( 1-t^{\alpha }\right) dt &=&\int_{0}^{1}\frac{%
\left\vert \left( 1-t\right) ^{\theta }-t^{\theta }\right\vert }{A_{t}^{2}}%
dt-\int_{0}^{1}\frac{\left\vert \left( 1-t\right) ^{\theta }-t^{\theta
}\right\vert }{A_{t}^{2}}t^{\alpha }dt  \notag \\
&=&C_{4}\left( \theta ;a,b\right) -C_{5}\left( \theta ;\alpha ;a,b\right) 
\notag \\
&=&C_{6}\left( \theta ;\alpha ;a,b\right)  \label{2.11}
\end{eqnarray}%
Thus, if we use $\left( \ref{2.9}\right) $-$\left( \ref{2.11}\right) $ in $%
\left( \ref{2.8}\right) $ we get the inequality of $\ \left( \ref{2.7}%
\right) $ and this completes the proof.
\end{proof}

\begin{remark}
\bigskip If we take $\theta =1$, $\alpha =1$, $m=1$ in Theorem 6, then
inequality $\left( \ref{2.7}\right) $ becomes inequality $\left( \ref{1.3}%
\right) $ of Theorem 2.
\end{remark}

\begin{corollary}
In Theorem 6,

\begin{enumerate}
\item[(a)] If we take $\alpha =1$, $m=1$ we have the following inequality
for harmonically convex functions:%
\begin{eqnarray*}
\left\vert I_{f}\left( g;\theta ,a,b\right) \right\vert &\leq &\frac{%
ab\left( b-a\right) }{2}C_{4}^{1-1/q}\left( \theta ;a,b\right) \\
&&\times \left[ C_{5}\left( \theta ;1;a,b\right) \left\vert f^{\prime
}\left( a\right) \right\vert ^{q}+C_{6}\left( \theta ;1;a,b\right)
\left\vert f^{\prime }\left( b\right) \right\vert ^{q}\right] ^{1/q}\text{,}
\end{eqnarray*}

\item[(b)] If we take $\alpha =1$ we have the following inequality for
harmonically $m$-convex functions:%
\begin{eqnarray*}
\left\vert I_{f}\left( g;\theta ,a,b\right) \right\vert &\leq &\frac{%
ab\left( b-a\right) }{2}C_{4}^{1-1/q}\left( \theta ;a,b\right) \\
&&\times \left[ C_{5}\left( \theta ;1;a,b\right) \left\vert f^{\prime
}\left( a\right) \right\vert ^{q}+mC_{6}\left( \theta ;1;a,b\right)
\left\vert f^{\prime }\left( b/m\right) \right\vert ^{q}\right] ^{1/q}\text{,%
}
\end{eqnarray*}

\item[(c)] If we take $m=1$ we have the following inequality for
harmonically $\alpha $-convex functions:%
\begin{eqnarray*}
\left\vert I_{f}\left( g;\theta ,a,b\right) \right\vert &\leq &\frac{%
ab\left( b-a\right) }{2}C_{4}^{1-1/q}\left( \theta ;a,b\right) \\
&&\times \left[ C_{5}\left( \theta ;\alpha ;a,b\right) \left\vert f^{\prime
}\left( a\right) \right\vert ^{q}+C_{6}\left( \theta ;\alpha ;a,b\right)
\left\vert f^{\prime }\left( b\right) \right\vert ^{q}\right] ^{1/q}\text{.}
\end{eqnarray*}
\end{enumerate}
\end{corollary}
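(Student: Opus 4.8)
The plan is to derive Corollary 2 directly from Theorem 6 by specializing the parameters $\alpha $ and $m$; no new integral estimate is needed, and the only work is to record how the hypotheses and the constants $C_{4},C_{5},C_{6}$ collapse under each choice.

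For part~(a) I would put $\alpha =1$ and $m=1$. By the observation made just after the definition of harmonically $\left( \alpha ,m\right) $-convex functions in the introduction, the choice $\left( \alpha ,m\right) =\left( 1,1\right) $ reduces the defining inequality $\left( \ref{1.5}\right) $ to ordinary harmonic convexity; moreover $m=1$ turns $\left[ a,b/m\right] $ into $\left[ a,b\right] $ and $f^{\prime }\left( b/m\right) $ into $f^{\prime }\left( b\right) $. Hence any $f$ for which $\left\vert f^{\prime }\right\vert ^{q}$ is harmonically convex on $\left[ a,b\right] $ satisfies all hypotheses of Theorem 6 with $\alpha =1$, $m=1$, and substituting these values into $\left( \ref{2.7}\right) $ together with the formulas for $C_{4}$, $C_{5}$, $C_{6}$ produces precisely the asserted inequality, with $C_{5}\left( \theta ;1;a,b\right) $ and $C_{6}\left( \theta ;1;a,b\right) $ replacing the general $\alpha $-dependent constants (note that $C_{4}\left( \theta ;a,b\right) $ carries no $\alpha $ and is therefore unchanged).

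Parts~(b) and~(c) are handled the same way. For~(b) I keep $m\in \left( 0,1\right] $ arbitrary and only set $\alpha =1$; since $\left( \alpha ,m\right) =\left( 1,m\right) $ is exactly harmonic $m$-convexity, Theorem 6 with $\alpha =1$ yields the claim verbatim. For~(c) I keep $\alpha \in \left[ 0,1\right] $ arbitrary and set $m=1$; then $\left( \alpha ,m\right) =\left( \alpha ,1\right) $ is harmonic $\alpha $-convexity, $\left[ a,b/m\right] =\left[ a,b\right] $, $f^{\prime }\left( b/m\right) =f^{\prime }\left( b\right) $, and Theorem 6 gives the third inequality directly.

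There is no real obstacle here: the only points worth a line of verification are that the three named subclasses genuinely arise as special cases of harmonically $\left( \alpha ,m\right) $-convexity---which is exactly the remark recorded after the definition---and the trivial bookkeeping $b/m=b$ when $m=1$. If one wishes, the constants at $\alpha =1$ can be simplified using $\beta \left( \theta +1,2\right) =\frac{1}{\left( \theta +1\right) \left( \theta +2\right) }$ and $2^{\alpha -2}=\frac{1}{2}$, but since Corollary 2 is stated with the unsimplified $C$'s this step is optional.
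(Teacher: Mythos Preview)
Your proposal is correct and matches the paper's approach exactly: the corollary is stated without proof in the paper because each part follows by direct substitution of the indicated values of $\alpha$ and $m$ into inequality $(\ref{2.7})$ of Theorem~6, together with the observation (recorded after Definition~3) that the choices $(\alpha,m)\in\{(1,1),(1,m),(\alpha,1)\}$ recover harmonic convexity, harmonic $m$-convexity, and harmonic $\alpha$-convexity, respectively. Your handling of the bookkeeping $b/m=b$ when $m=1$ and the remark that $C_{4}$ is independent of $\alpha$ are exactly what is needed.
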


\begin{theorem}
Let $f:I\subset \left( 0,\infty \right) \rightarrow \mathbb{R}$ be a
differentiable function on $I{{}%
{{}^\circ}%
}$, $a,b/m\in I{{}%
{{}^\circ}%
}$ with $a<b$, $m\in \left( 0,1\right] $ and $f^{\prime }\in L\left[ a,b%
\right] $. If $\left\vert f^{\prime }\right\vert ^{q}$ is harmonically $%
\left( \alpha ,m\right) $-convex on $\left[ a,b/m\right] $ for some fixed $\
q>1$,with $\alpha \in \left[ 0,1\right] $, then 
\begin{eqnarray}
\left\vert I_{f}\left( g;\theta ,a,b\right) \right\vert &\leq &\frac{a\left(
b-a\right) }{2b}\left( \frac{1}{\theta p+1}\right) ^{1/p}\left( \frac{%
\left\vert f^{\prime }\left( a\right) \right\vert ^{q}+m\alpha \left\vert
f^{\prime }\left( b/m\right) \right\vert ^{q}}{\alpha +1}\right) ^{1/q} 
\notag \\
&&\times \left[ 
\begin{array}{c}
\begin{array}{c}
_{2}F_{1}^{1/p}\left( 2p,\theta p+1;\theta p+2;1-\frac{a}{b}\right)%
\end{array}
\\ 
+%
\begin{array}{c}
_{2}F_{1}^{1/p}\left( 2p,1;\theta p+2;1-\frac{a}{b}\right)%
\end{array}%
\end{array}%
\right] \text{ \ \ \ \ }  \label{2.12}
\end{eqnarray}%
where $\frac{1}{p}+\frac{1}{q}=1$.
\end{theorem}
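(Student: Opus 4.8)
The plan is to follow the proof of Theorem 5 verbatim up to the point where the pointwise bound on $\left|f'\left(ab/A_t\right)\right|$ is introduced, but to replace the power mean inequality by H\"older's inequality. Set $A_{t}=tb+(1-t)a$ and $B_{u}=ua+(1-u)b$, as in the earlier proofs. Starting from identity $(\ref{1.7})$, taking absolute values and using $\left|(1-t)^{\theta }-t^{\theta }\right|\le (1-t)^{\theta }+t^{\theta }$, I would write
$$
\left|I_f(g;\theta,a,b)\right|
\le \frac{ab(b-a)}{2}\left[\int_0^1 \frac{(1-t)^{\theta }}{A_t^{2}}\left|f'\!\left(\tfrac{ab}{A_t}\right)\right|dt
+\int_0^1 \frac{t^{\theta }}{A_t^{2}}\left|f'\!\left(\tfrac{ab}{A_t}\right)\right|dt\right].
$$
Applying H\"older's inequality (with exponents $p$ and $q$) to each of the two integrals separately --- putting the weight $(1-t)^{\theta }/A_t^{2}$, respectively $t^{\theta }/A_t^{2}$, into the $p$-th power factor and leaving $\left|f'\left(ab/A_t\right)\right|$ in the $q$-th power factor --- produces the common factor $\bigl(\int_0^1 |f'(ab/A_t)|^q\,dt\bigr)^{1/q}$ in both terms.

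That common factor is controlled by the harmonically $(\alpha ,m)$-convexity estimate $(\ref{2.2})$: integrating $|f'(ab/A_t)|^q\le t^{\alpha }|f'(a)|^q+m(1-t^{\alpha })|f'(b/m)|^q$ over $[0,1]$ and using $\int_0^1 t^{\alpha }\,dt=\frac{1}{\alpha+1}$ and $\int_0^1(1-t^{\alpha })\,dt=\frac{\alpha}{\alpha+1}$ gives
$$
\int_0^1\left|f'\!\left(\tfrac{ab}{A_t}\right)\right|^q dt \le \frac{|f'(a)|^q+m\alpha|f'(b/m)|^q}{\alpha+1},
$$
which is exactly the $1/q$-factor appearing in $(\ref{2.12})$.

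It then remains to evaluate the two weight integrals $\int_0^1 (1-t)^{\theta p}A_t^{-2p}\,dt$ and $\int_0^1 t^{\theta p}A_t^{-2p}\,dt$. For both, the substitution $u=1-t$ turns $A_t$ into $B_u=b\bigl(1-u(1-\tfrac ab)\bigr)$, after which the integral representation of ${}_{2}F_{1}$ recalled in the preliminaries applies (legitimately, since $0<1-\tfrac ab<1$ and the relevant parameters satisfy $C>B>0$). Matching the parameters yields
$$
\int_0^1 \frac{(1-t)^{\theta p}}{A_t^{2p}}dt = \frac{b^{-2p}}{\theta p+1}\,{}_{2}F_{1}\!\left(2p,\theta p+1;\theta p+2;1-\tfrac ab\right),\qquad
\int_0^1 \frac{t^{\theta p}}{A_t^{2p}}dt = \frac{b^{-2p}}{\theta p+1}\,{}_{2}F_{1}\!\left(2p,1;\theta p+2;1-\tfrac ab\right).
$$
Raising these to the power $1/p$, substituting back, and using $\frac{ab(b-a)}{2}\cdot b^{-2}=\frac{a(b-a)}{2b}$ to collect the constants delivers $(\ref{2.12})$.

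The only genuinely delicate point is this last bookkeeping step: one must choose the substitution ($u=1-t$ in \emph{both} integrals, not the naive one for the $t^{\theta p}$ factor) that writes $A_t$ in the form $b(1-zu)$ with $z=1-\tfrac ab\in(0,1)$, so that the hypergeometric representation is valid --- the naive substitution would force $|z|\ge 1$ in general. Everything else, namely the triangle inequality, the application of H\"older, and the integration against $t^{\alpha }$, is routine.
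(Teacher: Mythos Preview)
Your proposal is correct and follows essentially the same route as the paper's proof: both start from identity $(\ref{1.7})$, bound $\left|(1-t)^{\theta}-t^{\theta}\right|$ by $(1-t)^{\theta}+t^{\theta}$, apply H\"older's inequality separately to the two resulting integrals (with the weight $A_t^{-2}(1-t)^{\theta}$ or $A_t^{-2}t^{\theta}$ in the $p$-factor), use the harmonically $(\alpha,m)$-convexity bound $(\ref{2.2})$ on the common $q$-factor, and then evaluate the two weight integrals via the substitution $u=1-t$ (turning $A_t$ into $B_u$) and the Euler integral for ${}_2F_1$. Your remark about needing $u=1-t$ in both integrals so that $B_u=b\bigl(1-u(1-\tfrac{a}{b})\bigr)$ with $1-\tfrac{a}{b}\in(0,1)$ is exactly the device the paper uses implicitly when it rewrites the $A_t$-integrals as $B_u$-integrals before invoking the hypergeometric representation.
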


\begin{proof}
Let $A_{t}=tb+\left( 1-t\right) a$, $B_{u}=ua+\left( 1-u\right) b$. From $%
\left( \ref{1.7}\right) $, using the property of the modulus, the H\"{o}lder
inequality and $\left( \ref{2.2}\right) $, we find%
\begin{eqnarray}
\left\vert I_{f}\left( g;\theta ,a,b\right) \right\vert &\leq &\frac{%
ab\left( b-a\right) }{2}\left[ \int_{0}^{1}\frac{\left( 1-t\right) ^{\theta }%
}{A_{t}^{2}}\left\vert f^{\prime }\left( \frac{ab}{A_{t}}\right) \right\vert
dt+\int_{0}^{1}\frac{t^{\theta }}{A_{t}^{2}}\left\vert f^{\prime }\left( 
\frac{ab}{A_{t}}\right) \right\vert dt\right]  \notag \\
&\leq &\frac{ab\left( b-a\right) }{2}\left[ 
\begin{array}{c}
\left( \int_{0}^{1}\frac{\left( 1-t\right) ^{\theta p}}{A_{t}^{2p}}dt\right)
^{1/p}\left( \int_{0}^{1}\left\vert f^{\prime }\left( \frac{ab}{A_{t}}%
\right) \right\vert ^{q}dt\right) ^{1/q} \\ 
+\left( \int_{0}^{1}\frac{t^{\theta p}}{A_{t}^{2p}}dt\right) ^{1/p}\left(
\int_{0}^{1}\left\vert f^{\prime }\left( \frac{ab}{A_{t}}\right) \right\vert
^{q}dt\right) ^{1/q}%
\end{array}%
\right]  \notag \\
&\leq &\frac{ab\left( b-a\right) }{2}\left( \left( \int_{0}^{1}\frac{%
u^{\theta p}}{B_{u}^{2p}}du\right) ^{1/p}+\left( \int_{0}^{1}\frac{\left(
1-u\right) ^{\theta p}}{B_{u}^{2p}}du\right) ^{1/p}\right)  \notag \\
&&\times \left( \int_{0}^{1}t^{\alpha }\left\vert f^{\prime }\left( a\right)
\right\vert ^{q}+m\left( 1-t^{\alpha }\right) \left\vert f^{\prime }\left(
b/m\right) \right\vert ^{q}dt\right) ^{1/q}  \notag \\
&=&\frac{ab\left( b-a\right) }{2}\left( K_{1}^{1/p}+K_{2}^{1/p}\right)
\left( \frac{\left\vert f^{\prime }\left( a\right) \right\vert ^{q}+m\alpha
\left\vert f^{\prime }\left( b/m\right) \right\vert ^{q}}{\alpha +1}\right)
^{1/q}.  \label{2.13}
\end{eqnarray}%
calculating $K_{1}$ and $K_{2}$ we have 
\begin{equation}
K_{1}=\int_{0}^{1}\frac{u^{\theta p}}{B_{u}^{2p}}du=\frac{b^{-2p}}{\theta p+1%
}%
\begin{array}{c}
_{2}F_{1}\left( 2p,\theta p+1;\theta p+2;1-\frac{a}{b}\right)%
\end{array}
\label{2.14}
\end{equation}%
\begin{equation}
K_{1}=\int_{0}^{1}\frac{\left( 1-u\right) ^{\theta p}}{B_{u}^{2p}}du=\frac{%
b^{-2p}}{\theta p+1}%
\begin{array}{c}
_{2}F_{1}\left( 2p,1;\theta p+2;1-\frac{a}{b}\right)%
\end{array}
\label{2.15}
\end{equation}%
\bigskip Thus, if we use $\left( \ref{2.14}\right) $,$\left( \ref{2.15}%
\right) $ in $\left( \ref{2.13}\right) $ we get the inequality of $\ \left( %
\ref{2.12}\right) $ and this completes the proof.
\end{proof}

\begin{corollary}
In Theorem 7,

\begin{enumerate}
\item[(a)] If we take $\alpha =1$, $m=1$ we have the following inequality
for harmonically convex functions:%
\begin{eqnarray*}
\left\vert I_{f}\left( g;\theta ,a,b\right) \right\vert &\leq &\frac{a\left(
b-a\right) }{2b}\left( \frac{1}{\theta p+1}\right) ^{1/p}\left( \frac{%
\left\vert f^{\prime }\left( a\right) \right\vert ^{q}+\left\vert f^{\prime
}\left( b\right) \right\vert ^{q}}{2}\right) ^{1/q} \\
&&\times \left[ 
\begin{array}{c}
\begin{array}{c}
_{2}F_{1}^{1/p}\left( 2p,\theta p+1;\theta p+2;1-\frac{a}{b}\right)%
\end{array}
\\ 
+%
\begin{array}{c}
_{2}F_{1}^{1/p}\left( 2p,1;\theta p+2;1-\frac{a}{b}\right)%
\end{array}%
\end{array}%
\right] \text{,}
\end{eqnarray*}

\item[(b)] If we take $\alpha =1$ we have the following inequality for
harmonically $m$-convex functions:%
\begin{eqnarray*}
\left\vert I_{f}\left( g;\theta ,a,b\right) \right\vert &\leq &\frac{a\left(
b-a\right) }{2b}\left( \frac{1}{\theta p+1}\right) ^{1/p}\left( \frac{%
\left\vert f^{\prime }\left( a\right) \right\vert ^{q}+m\left\vert f^{\prime
}\left( b/m\right) \right\vert ^{q}}{2}\right) ^{1/q} \\
&&\times \left[ 
\begin{array}{c}
\begin{array}{c}
_{2}F_{1}^{1/p}\left( 2p,\theta p+1;\theta p+2;1-\frac{a}{b}\right)%
\end{array}
\\ 
+%
\begin{array}{c}
_{2}F_{1}^{1/p}\left( 2p,1;\theta p+2;1-\frac{a}{b}\right)%
\end{array}%
\end{array}%
\right] \text{,}
\end{eqnarray*}

\item[(c)] If we take $m=1$ we have the following inequality for
harmonically $\alpha $-convex functions:%
\begin{eqnarray*}
\left\vert I_{f}\left( g;\theta ,a,b\right) \right\vert &\leq &\frac{a\left(
b-a\right) }{2b}\left( \frac{1}{\theta p+1}\right) ^{1/p}\left( \frac{%
\left\vert f^{\prime }\left( a\right) \right\vert ^{q}+\alpha \left\vert
f^{\prime }\left( b\right) \right\vert ^{q}}{\alpha +1}\right) ^{1/q} \\
&&\times \left[ 
\begin{array}{c}
\begin{array}{c}
_{2}F_{1}^{1/p}\left( 2p,\theta p+1;\theta p+2;1-\frac{a}{b}\right)%
\end{array}
\\ 
+%
\begin{array}{c}
_{2}F_{1}^{1/p}\left( 2p,1;\theta p+2;1-\frac{a}{b}\right)%
\end{array}%
\end{array}%
\right] \text{.}
\end{eqnarray*}
\end{enumerate}
\end{corollary}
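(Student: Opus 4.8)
The plan is to obtain all three inequalities directly from the general estimate \eqref{2.12} of Theorem 7 by substituting the indicated values of $\alpha$ and $m$; no new integral estimation is required, because in each case the hypothesis is merely a special instance of harmonic $(\alpha,m)$-convexity. Recall from the remark following the definition of harmonically $(\alpha,m)$-convex functions that the choices $(\alpha,m)=(1,1)$, $(\alpha,m)=(1,m)$ and $(\alpha,m)=(\alpha,1)$ reduce that class to the harmonically convex, harmonically $m$-convex and harmonically $\alpha$-convex functions, respectively. Hence in each of parts (a), (b), (c) the standing hypotheses of Theorem 7 are satisfied, and \eqref{2.12} applies verbatim with the corresponding parameter values.

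The crucial observation is that the prefactor $\frac{a(b-a)}{2b}\left(\frac{1}{\theta p+1}\right)^{1/p}$ and the bracketed sum of hypergeometric terms in \eqref{2.12} depend only on $a$, $b$, $\theta$ and $p$, and are therefore wholly unaffected by the specialization of $\alpha$ and $m$. The only factor that changes is the power-mean term $\left(\frac{\left\vert f'(a)\right\vert^q+m\alpha\left\vert f'(b/m)\right\vert^q}{\alpha+1}\right)^{1/q}$, so the entire argument reduces to evaluating this single expression at the prescribed parameters while copying everything else unchanged.

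For part (a), setting $\alpha=1$ and $m=1$ gives $b/m=b$ and $\frac{m\alpha}{\alpha+1}=\frac{1}{2}$, so this factor becomes $\left(\frac{\left\vert f'(a)\right\vert^q+\left\vert f'(b)\right\vert^q}{2}\right)^{1/q}$. For part (b), keeping $m$ free and setting only $\alpha=1$ yields $\frac{m\alpha}{\alpha+1}=\frac{m}{2}$ and $\frac{1}{\alpha+1}=\frac{1}{2}$, so the factor becomes $\left(\frac{\left\vert f'(a)\right\vert^q+m\left\vert f'(b/m)\right\vert^q}{2}\right)^{1/q}$. For part (c), keeping $\alpha$ free and setting only $m=1$ leaves $b/m=b$ and $\frac{m\alpha}{\alpha+1}=\frac{\alpha}{\alpha+1}$, producing $\left(\frac{\left\vert f'(a)\right\vert^q+\alpha\left\vert f'(b)\right\vert^q}{\alpha+1}\right)^{1/q}$. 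Substituting each of these into \eqref{2.12}, with all remaining factors intact, gives precisely the three displayed inequalities.

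There is no genuine obstacle here: the work is purely arithmetic bookkeeping in the single power-mean factor, together with the verification that each parameter choice is a legitimate instance of harmonic $(\alpha,m)$-convexity. The only point that warrants a moment's care is to confirm admissibility under the standing assumptions $\alpha\in[0,1]$ and $m\in(0,1]$, which holds since $\alpha=1$ and $m=1$ lie at the endpoints of the respective allowed ranges; consequently each specialization is valid and the corollary follows.
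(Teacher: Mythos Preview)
Your proposal is correct and matches the paper's approach: the corollary is stated without proof in the paper precisely because it follows by direct substitution of the indicated values of $\alpha$ and $m$ into inequality \eqref{2.12}, exactly as you describe. Your observation that only the power-mean factor changes while the prefactor and hypergeometric terms are independent of $\alpha$ and $m$ is the full content of the argument.
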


\begin{theorem}
\bigskip Let $f:I\subset \left( 0,\infty \right) \rightarrow \mathbb{R}$ be
a differentiable function on $I{{}%
{{}^\circ}%
}$, $a,b/m\in I{{}%
{{}^\circ}%
}$ with $a<b$, $m\in \left( 0,1\right] $ and $f^{\prime }\in L\left[ a,b%
\right] $. If $\left\vert f^{\prime }\right\vert ^{q}$ is harmonically $%
\left( \alpha ,m\right) $-convex on $\left[ a,b/m\right] $ for some fixed $\
q>1$,with $\alpha \in \left[ 0,1\right] $, then 
\begin{eqnarray}
\left\vert I_{f}\left( g;\theta ,a,b\right) \right\vert &\leq &\frac{a\left(
b-a\right) }{2b}\left( \frac{1}{\theta p+1}\right) ^{1/p}\left( \frac{1}{%
\alpha +1}\right) ^{1/q}  \notag \\
&&\times \left( 
\begin{array}{c}
\begin{array}{c}
_{2}F_{1}\left( 2q,1;\alpha +2;1-\frac{a}{b}\right)%
\end{array}%
\left\vert f^{\prime }\left( a\right) \right\vert ^{q} \\ 
+m\left[ 
\begin{array}{c}
\left( \alpha +1\right) 
\begin{array}{c}
_{2}F_{1}\left( 2q,1;2;1-\frac{a}{b}\right)%
\end{array}
\\ 
-%
\begin{array}{c}
_{2}F_{1}\left( 2q,1;\alpha +2;1-\frac{a}{b}\right)%
\end{array}%
\end{array}%
\right] \left\vert f^{\prime }\left( b/m\right) \right\vert ^{q}%
\end{array}%
\right) ^{1/q}\text{ \ \ \ \ }  \label{2.16}
\end{eqnarray}%
where $\frac{1}{p}+\frac{1}{q}=1$.
\end{theorem}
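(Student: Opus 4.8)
The plan is to run the argument of the proof of Theorem 7, but to apply the H\"{o}lder inequality with a different split: the whole power $A_{t}^{2q}$ of the denominator $A_{t}=tb+(1-t)a$ will be kept in the factor carrying $\left\vert f^{\prime }\right\vert ^{q}$. Starting from the identity $\left( \ref{1.7}\right) $, apply the triangle inequality and then H\"{o}lder with exponents $p$ and $q$ to the product $\left\vert (1-t)^{\theta }-t^{\theta }\right\vert \cdot A_{t}^{-2}\left\vert f^{\prime }(ab/A_{t})\right\vert $, obtaining
\[
\left\vert I_{f}\left( g;\theta ,a,b\right) \right\vert \le \frac{ab(b-a)}{2}\left( \int_{0}^{1}\left\vert (1-t)^{\theta }-t^{\theta }\right\vert ^{p}dt\right) ^{1/p}\left( \int_{0}^{1}\frac{\left\vert f^{\prime }(ab/A_{t})\right\vert ^{q}}{A_{t}^{2q}}\,dt\right) ^{1/q}.
\]

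For the first factor I would use Lemma 1: since $\left\vert (1-t)^{\theta }-t^{\theta }\right\vert \le \left\vert 1-2t\right\vert ^{\theta }$ for all $t\in \lbrack 0,1]$ (which forces $0<\theta \le 1$), one has $\int_{0}^{1}\left\vert (1-t)^{\theta }-t^{\theta }\right\vert ^{p}dt\le \int_{0}^{1}\left\vert 1-2t\right\vert ^{\theta p}dt=\frac{1}{\theta p+1}$; this yields the factor $\left( \frac{1}{\theta p+1}\right) ^{1/p}$, and the $\frac{1}{2}$ out front is precisely what the cruder bound $(1-t)^{\theta }+t^{\theta }$ would cost. For the second factor I would insert the harmonically $\left( \alpha ,m\right) $-convex estimate $\left( \ref{2.2}\right) $, then substitute $u=1-t$ (so that $A_{t}$ becomes $B_{u}=ua+(1-u)b$ and $t^{\alpha }$ becomes $(1-u)^{\alpha }$). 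This reduces the integral to $\left\vert f^{\prime }(a)\right\vert ^{q}\int_{0}^{1}(1-u)^{\alpha }B_{u}^{-2q}du+m\left\vert f^{\prime }(b/m)\right\vert ^{q}\left( \int_{0}^{1}B_{u}^{-2q}du-\int_{0}^{1}(1-u)^{\alpha }B_{u}^{-2q}du\right) $.

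Writing $B_{u}=b\left( 1-u\left( 1-\frac{a}{b}\right) \right) $ and applying the hypergeometric integral representation recalled in the introduction (with second parameter $1$; third parameter $\alpha +2$ in the first case, since the exponent of $(1-u)$ must be identified with $c-b-1$, and third parameter $2$ in the second), these two integrals equal $\frac{b^{-2q}}{\alpha +1}\,{}_{2}F_{1}\!\left( 2q,1;\alpha +2;1-\frac{a}{b}\right) $ and $b^{-2q}\,{}_{2}F_{1}\!\left( 2q,1;2;1-\frac{a}{b}\right) $. Collecting terms, the second factor to the power $1/q$ becomes $\frac{b^{-2}}{(\alpha +1)^{1/q}}$ times the $1/q$-th power of the bracket in $\left( \ref{2.16}\right) $, and multiplying through by $\frac{ab(b-a)}{2}\left( \frac{1}{\theta p+1}\right) ^{1/p}$, together with $\frac{ab(b-a)}{2}b^{-2}=\frac{a(b-a)}{2b}$, gives exactly $\left( \ref{2.16}\right) $. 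The only genuinely delicate points are the choice of H\"{o}lder split (all of $A_{t}^{2q}$ must go with $\left\vert f^{\prime }\right\vert ^{q}$) and the correct identification of the two ${}_{2}F_{1}$'s; everything else is routine, and the argument quietly needs $0<\theta \le 1$ through Lemma 1.
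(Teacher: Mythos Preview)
Your proposal is correct and follows essentially the same route as the paper's proof: the same H\"{o}lder split (all of $A_{t}^{-2q}$ with $\left\vert f^{\prime }\right\vert ^{q}$), the same use of Lemma~1 to pass from $\left\vert (1-t)^{\theta }-t^{\theta }\right\vert$ to $\left\vert 1-2t\right\vert ^{\theta }$, the same application of $\left(\ref{2.2}\right)$, and the same substitution $u=1-t$ to identify the two hypergeometric integrals. Your observation that the step through Lemma~1 tacitly forces $0<\theta \le 1$ is a valid point that the paper's statement of the theorem omits, even though the analogous Theorem~6 does include this restriction explicitly.
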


\begin{proof}
Let $A_{t}=tb+\left( 1-t\right) a$, $B_{u}=ua+\left( 1-u\right) b$. From $%
\left( \ref{1.7}\right) $, using the H\"{o}lder inequality, Lemma 1, and $%
\left( \ref{2.2}\right) $, we find%
\begin{eqnarray}
\left\vert I_{f}\left( g;\theta ,a,b\right) \right\vert &\leq &\frac{%
ab\left( b-a\right) }{2}\int_{0}^{1}\frac{\left\vert \left( 1-t\right)
^{\theta }-t^{\theta }\right\vert }{A_{t}^{2}}\left\vert f^{\prime }\left( 
\frac{ab}{A_{t}}\right) \right\vert dt  \notag \\
&\leq &\frac{ab\left( b-a\right) }{2}\left( \int_{0}^{1}\left\vert \left(
1-t\right) ^{\theta }-t^{\theta }\right\vert ^{p}dt\right) ^{1/p}  \notag \\
&&\times \left( \int_{0}^{1}\frac{1}{A_{t}^{2q}}\left\vert f^{\prime }\left( 
\frac{ab}{A_{t}}\right) \right\vert ^{q}dt\right) ^{1/q}  \notag \\
&\leq &\frac{ab\left( b-a\right) }{2}\left( \int_{0}^{1}\left\vert
1-2t\right\vert ^{\theta p}dt\right) ^{1/p}  \notag \\
&&\times \left( \int_{0}^{1}\frac{1}{A_{t}^{2q}}\left[ t^{\alpha }\left\vert
f^{\prime }\left( a\right) \right\vert ^{q}+m\left( 1-t^{\alpha }\right)
\left\vert f^{\prime }\left( b/m\right) \right\vert ^{q}\right] dt\right)
^{1/q}  \label{2.17}
\end{eqnarray}%
calculating following integrals, we have%
\begin{equation}
\int_{0}^{1}\left\vert 1-2t\right\vert ^{\theta p}dt=\frac{1}{\theta p+1}
\label{2.18}
\end{equation}%
\begin{equation}
\int_{0}^{1}\frac{t^{\alpha }}{A_{t}^{2q}}dt=\int_{0}^{1}\frac{\left(
1-u\right) ^{\alpha }}{B_{u}^{2q}}dt=\frac{b^{-2q}}{\alpha +1}%
\begin{array}{c}
_{2}F_{1}\left( 2q,1;\alpha +2;1-\frac{a}{b}\right)%
\end{array}
\label{2.19}
\end{equation}%
\begin{eqnarray}
\int_{0}^{1}\frac{1-t^{\alpha }}{A_{t}^{2q}}dt &=&\int_{0}^{1}\frac{1-\left(
1-u\right) ^{\alpha }}{B_{u}^{2q}}dt=b^{-2q}%
\begin{array}{c}
_{2}F_{1}\left( 2q,1;2;1-\frac{a}{b}\right)%
\end{array}
\notag \\
&&-\frac{b^{-2q}}{\alpha +1}%
\begin{array}{c}
_{2}F_{1}\left( 2q,1;\alpha +2;1-\frac{a}{b}\right)%
\end{array}%
\text{ \ \ \ \ \ \ \ \ \ \ \ \ }  \label{2.20}
\end{eqnarray}%
Thus, if we use $\left( \ref{2.18}\right) $-$\left( \ref{2.20}\right) $ in $%
\left( \ref{2.17}\right) $ we get the inequality of $\ \left( \ref{2.16}%
\right) $ and this completes the proof.
\end{proof}

\begin{remark}
\bigskip \bigskip If we take $\theta =1$, $\alpha =1$, $m=1$ in Theorem 8,
then inequality $\left( \ref{2.16}\right) $ becomes inequality $\left( \ref%
{1.4}\right) $ of Theorem 3.
\end{remark}

\begin{corollary}
In Theorem 8,

\begin{enumerate}
\item[(a)] If we take $\alpha =1$, $m=1$ we have the following inequality
for harmonically convex functions:%
\begin{eqnarray*}
\left\vert I_{f}\left( g;\theta ,a,b\right) \right\vert &\leq &\frac{a\left(
b-a\right) }{2^{1+1/q}b}\left( \frac{1}{\theta p+1}\right) ^{1/p} \\
&&\times \left( 
\begin{array}{c}
\begin{array}{c}
_{2}F_{1}\left( 2q,1;3;1-\frac{a}{b}\right)%
\end{array}%
\left\vert f^{\prime }\left( a\right) \right\vert ^{q} \\ 
+\left[ 
\begin{array}{c}
2%
\begin{array}{c}
_{2}F_{1}\left( 2q,1;2;1-\frac{a}{b}\right)%
\end{array}
\\ 
-%
\begin{array}{c}
_{2}F_{1}\left( 2q,1;3;1-\frac{a}{b}\right)%
\end{array}%
\end{array}%
\right] \left\vert f^{\prime }\left( b\right) \right\vert ^{q}%
\end{array}%
\right) ^{1/q}\text{,}
\end{eqnarray*}

\item[(b)] If we take $\alpha =1$ we have the following inequality for
harmonically $m$-convex functions:%
\begin{eqnarray*}
\left\vert I_{f}\left( g;\theta ,a,b\right) \right\vert &\leq &\frac{a\left(
b-a\right) }{2^{1+1/q}b}\left( \frac{1}{\theta p+1}\right) ^{1/p} \\
&&\times \left( 
\begin{array}{c}
\begin{array}{c}
_{2}F_{1}\left( 2q,1;3;1-\frac{a}{b}\right)%
\end{array}%
\left\vert f^{\prime }\left( a\right) \right\vert ^{q} \\ 
+m\left[ 
\begin{array}{c}
2%
\begin{array}{c}
_{2}F_{1}\left( 2q,1;2;1-\frac{a}{b}\right)%
\end{array}
\\ 
-%
\begin{array}{c}
_{2}F_{1}\left( 2q,1;3;1-\frac{a}{b}\right)%
\end{array}%
\end{array}%
\right] \left\vert f^{\prime }\left( b/m\right) \right\vert ^{q}%
\end{array}%
\right) ^{1/q}\text{,}
\end{eqnarray*}

\item[(c)] If we take $m=1$ we have the following inequality for
harmonically $\alpha $-convex functions:%
\begin{eqnarray*}
\left\vert I_{f}\left( g;\theta ,a,b\right) \right\vert &\leq &\frac{a\left(
b-a\right) }{2b}\left( \frac{1}{\theta p+1}\right) ^{1/p}\left( \frac{1}{%
\alpha +1}\right) ^{1/q} \\
&&\times \left( 
\begin{array}{c}
\begin{array}{c}
_{2}F_{1}\left( 2q,1;\alpha +2;1-\frac{a}{b}\right)%
\end{array}%
\left\vert f^{\prime }\left( a\right) \right\vert ^{q} \\ 
+\left[ 
\begin{array}{c}
\left( \alpha +1\right) 
\begin{array}{c}
_{2}F_{1}\left( 2q,1;2;1-\frac{a}{b}\right)%
\end{array}
\\ 
-%
\begin{array}{c}
_{2}F_{1}\left( 2q,1;\alpha +2;1-\frac{a}{b}\right)%
\end{array}%
\end{array}%
\right] \left\vert f^{\prime }\left( b\right) \right\vert ^{q}%
\end{array}%
\right) ^{1/q}\text{.}
\end{eqnarray*}
\end{enumerate}
\end{corollary}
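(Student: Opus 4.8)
The plan is to derive all three inequalities directly from Theorem 8, since the Corollary is nothing more than the evaluation of inequality $\left(\ref{2.16}\right)$ at the prescribed parameter values. By the remark following the definition of harmonically $\left(\alpha,m\right)$-convex functions, the assertion that $\left\vert f^{\prime}\right\vert^{q}$ is harmonically convex, harmonically $m$-convex, or harmonically $\alpha$-convex is exactly the statement that $\left\vert f^{\prime}\right\vert^{q}$ is harmonically $\left(\alpha,m\right)$-convex with $\left(\alpha,m\right)\in\left\{\left(1,1\right),\left(1,m\right),\left(\alpha,1\right)\right\}$. Hence the hypotheses of Theorem 8 are satisfied in each case, and I would simply substitute the corresponding values of $\alpha$ and $m$ into $\left(\ref{2.16}\right)$.

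For part (a) I would set $\alpha=1$ and $m=1$. The factor $\left(\frac{1}{\alpha+1}\right)^{1/q}$ becomes $2^{-1/q}$, which combines with the leading $\frac{a\left(b-a\right)}{2b}$ to give $\frac{a\left(b-a\right)}{2^{1+1/q}b}$; the hypergeometric parameter $\alpha+2$ becomes $3$, the coefficient $\alpha+1$ becomes $2$, and $b/m=b$, yielding exactly the asserted bound. Part (b) is the same computation with $m$ left free: $\alpha=1$ again produces the factor $2^{-1/q}$, the exponent $\alpha+2=3$ and the coefficient $\alpha+1=2$, while the argument $b/m$ and the outer factor $m$ are retained, giving the harmonically $m$-convex version. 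For part (c) I would set $m=1$ only: here $b/m=b$ and the outer factor $m$ disappears, but $\alpha$ stays general, so the constant remains $\frac{a\left(b-a\right)}{2b}\left(\frac{1}{\alpha+1}\right)^{1/q}$ and the hypergeometric parameter $\alpha+2$ together with the coefficient $\alpha+1$ are kept as in $\left(\ref{2.16}\right)$.

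There is no genuine analytic obstacle, since every estimate needed, namely the representation $\left(\ref{1.7}\right)$, the H\"{o}lder step, Lemma 1, and the harmonic $\left(\alpha,m\right)$-convexity bound $\left(\ref{2.2}\right)$, has already been absorbed into Theorem 8. The only point requiring care is the bookkeeping of the constant in parts (a) and (b): one must check that $\frac{a\left(b-a\right)}{2b}\cdot2^{-1/q}$ is correctly rewritten as $\frac{a\left(b-a\right)}{2^{1+1/q}b}$ via $2^{1+1/q}=2\cdot2^{1/q}$, and that the bracketed difference of hypergeometric terms in $\left(\ref{2.16}\right)$ specializes, when $\alpha=1$, to $2\,{}_{2}F_{1}\left(2q,1;2;1-\frac{a}{b}\right)-{}_{2}F_{1}\left(2q,1;3;1-\frac{a}{b}\right)$. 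Once these substitutions are verified the three inequalities follow immediately.
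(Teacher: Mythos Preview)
Your proposal is correct and follows exactly the approach the paper takes: the corollary is stated without a separate proof and is obtained by substituting $\alpha=1$, $m=1$ (resp.\ $\alpha=1$; resp.\ $m=1$) directly into inequality $\left(\ref{2.16}\right)$ of Theorem~8. Your bookkeeping of the constant $\frac{a(b-a)}{2b}\cdot 2^{-1/q}=\frac{a(b-a)}{2^{1+1/q}b}$ and the specialization of the hypergeometric parameters is accurate.
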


\begin{theorem}
\bigskip Let $f:I\subset \left( 0,\infty \right) \rightarrow \mathbb{R}$ be
a differentiable function on $I{{}%
{{}^\circ}%
}$, $a,b/m\in I{{}%
{{}^\circ}%
}$ with $a<b$, $m\in \left( 0,1\right] $ and $f^{\prime }\in L\left[ a,b%
\right] $. If $\left\vert f^{\prime }\right\vert ^{q}$ is harmonically $%
\left( \alpha ,m\right) $-convex on $\left[ a,b/m\right] $ for some fixed $\
q>1$,with $\alpha \in \left[ 0,1\right] $, then 
\begin{eqnarray}
\left\vert I_{f}\left( g;\theta ,a,b\right) \right\vert &\leq &\frac{%
ab\left( b-a\right) }{2^{\left( 1/p\right) -1}\left( a+b\right) ^{2}}\left( 
\frac{1}{\theta p+1}\right) ^{1/p}\left( \frac{\left\vert f^{\prime }\left(
a\right) \right\vert ^{q}+m\alpha \left\vert f^{\prime }\left( b/m\right)
\right\vert ^{q}}{\alpha +1}\right) ^{1/q}  \notag \\
&&\times \left[ 
\begin{array}{c}
\begin{array}{c}
_{2}F_{1}\left( 2p,\theta p+1;\theta p+2;\frac{b-a}{b+a}\right)%
\end{array}
\\ 
+%
\begin{array}{c}
_{2}F_{1}\left( 2p,\theta p+1;\theta p+2;\frac{a-b}{b+a}\right)%
\end{array}%
\end{array}%
\right] ^{1/p}  \label{2.21}
\end{eqnarray}%
where $\frac{1}{p}+\frac{1}{q}=1$.
\end{theorem}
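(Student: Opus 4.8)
The plan is to combine the two mechanisms used above: the location of the
H\"{o}lder split from the proof of Theorem 7 (keeping the factor $A_{t}^{-2}$
together with the kernel $\left\vert \left( 1-t\right) ^{\theta }-t^{\theta
}\right\vert $) and the bound $\left\vert \left( 1-t\right) ^{\theta
}-t^{\theta }\right\vert \leq \left\vert 1-2t\right\vert ^{\theta }$ from the
proof of Theorem 8. Writing $A_{t}=tb+\left( 1-t\right) a$ and $
B_{u}=ua+\left( 1-u\right) b$, I would begin from the identity $\left( \ref
{1.7}\right) $, pass to the modulus, apply Lemma 1, and then apply H\"{o}
lder's inequality with exponents $p$ and $q$ to get
\begin{equation*}
\left\vert I_{f}\left( g;\theta ,a,b\right) \right\vert \leq \frac{ab\left(
b-a\right) }{2}\left( \int_{0}^{1}\frac{\left\vert 1-2t\right\vert ^{\theta p}
}{A_{t}^{2p}}dt\right) ^{1/p}\left( \int_{0}^{1}\left\vert f^{\prime }\left(
\frac{ab}{A_{t}}\right) \right\vert ^{q}dt\right) ^{1/q}.
\end{equation*}

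For the second factor I would insert the harmonically $\left( \alpha
,m\right) $-convexity estimate $\left( \ref{2.2}\right) $ and integrate term
by term, using $\int_{0}^{1}t^{\alpha }dt=\frac{1}{\alpha +1}$ and $
\int_{0}^{1}\left( 1-t^{\alpha }\right) dt=\frac{\alpha }{\alpha +1}$, which
produces exactly
\begin{equation*}
\int_{0}^{1}\left\vert f^{\prime }\left( \frac{ab}{A_{t}}\right) \right\vert
^{q}dt\leq \frac{\left\vert f^{\prime }\left( a\right) \right\vert
^{q}+m\alpha \left\vert f^{\prime }\left( b/m\right) \right\vert ^{q}}{\alpha
+1},
\end{equation*}
i.e. the middle parenthesis of $\left( \ref{2.21}\right) $.

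The heart of the argument is the evaluation of $\int_{0}^{1}\left\vert
1-2t\right\vert ^{\theta p}A_{t}^{-2p}dt$. Splitting the integral at $t=1/2$
and substituting $v=1-2t$ on $\left[ 0,1/2\right] $, respectively $v=2t-1$ on
$\left[ 1/2,1\right] $, one checks that in the two halves $A_{t}=\frac{a+b}{2}
\left( 1\mp v\frac{b-a}{b+a}\right) $, so the integral equals
\begin{equation*}
\frac{1}{2}\left( \frac{a+b}{2}\right) ^{-2p}\int_{0}^{1}v^{\theta p}\left[
\left( 1-v\frac{b-a}{b+a}\right) ^{-2p}+\left( 1-v\frac{a-b}{b+a}\right)
^{-2p}\right] dv.
\end{equation*}
Each of the two inner integrals is the Euler integral representation of $
{}_{2}F_{1}$ with parameters $\left( 2p,\theta p+1;\theta p+2;z\right) $,
where $\beta \left( \theta p+1,1\right) =\frac{1}{\theta p+1}$ and $\left\vert
z\right\vert =\frac{b-a}{b+a}<1$; hence the bracket above equals $\frac{1}{
\theta p+1}\left[ {}_{2}F_{1}\left( 2p,\theta p+1;\theta p+2;\frac{b-a}{b+a}
\right) +{}_{2}F_{1}\left( 2p,\theta p+1;\theta p+2;\frac{a-b}{b+a}\right)
\right] $. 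Taking the $1/p$ power reproduces the bracketed factor of $\left(
\ref{2.21}\right) $, and the numerical constant collapses, since $\frac{
ab\left( b-a\right) }{2}\cdot 2^{-1/p}\left( \frac{a+b}{2}\right) ^{-2}=\frac{
ab\left( b-a\right) }{2^{\left( 1/p\right) -1}\left( a+b\right) ^{2}}$; this
yields $\left( \ref{2.21}\right) $.

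The only delicate points are keeping the two affine substitutions $t\mapsto v$
consistent so that the hypergeometric arguments come out as $\frac{b-a}{b+a}$
and its negative, and tracking the power of $2$ that produces the exponent $
\left( 1/p\right) -1$; the remaining manipulations are exactly the integral
identities of the type already recorded in the earlier proofs.
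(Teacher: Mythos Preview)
Your proposal is correct and follows essentially the same route as the paper's proof: start from identity $(1.7)$, apply Lemma~1 to replace $\lvert (1-t)^{\theta}-t^{\theta}\rvert$ by $\lvert 1-2t\rvert^{\theta}$, use H\"older with the factor $A_t^{-2}$ kept on the $p$-side, estimate the $q$-integral via $(\ref{2.2})$, and evaluate $\int_0^1 \lvert 1-2t\rvert^{\theta p} A_t^{-2p}\,dt$ by splitting at $t=1/2$ and reducing each half to an Euler integral for ${}_2F_1$. The only cosmetic difference is that the paper first substitutes $u=2t$ and then $v=1-u$ (respectively $v=u-1$) in the two halves, whereas you go directly to $v=1-2t$ and $v=2t-1$; the resulting hypergeometric parameters and the constant $2^{(1/p)-1}$ agree.
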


\begin{proof}
Let $A_{t}=tb+\left( 1-t\right) a$. From $\left( \ref{1.7}\right) $, using
the H\"{o}lder inequality, Lemma 1, and $\left( \ref{2.2}\right) $, we find%
\begin{eqnarray}
\left\vert I_{f}\left( g;\theta ,a,b\right) \right\vert &\leq &\frac{%
ab\left( b-a\right) }{2}\int_{0}^{1}\frac{\left\vert \left( 1-t\right)
^{\theta }-t^{\theta }\right\vert }{A_{t}^{2}}\left\vert f^{\prime }\left( 
\frac{ab}{A_{t}}\right) \right\vert dt  \notag \\
&\leq &\frac{ab\left( b-a\right) }{2}\int_{0}^{1}\frac{\left\vert
1-2t\right\vert ^{\theta }}{A_{t}^{2}}\left\vert f^{\prime }\left( \frac{ab}{%
A_{t}}\right) \right\vert dt  \notag \\
&\leq &\frac{ab\left( b-a\right) }{2}\left( \int_{0}^{1}\frac{\left\vert
1-2t\right\vert ^{\theta p}}{A_{t}^{2p}}dt\right) ^{1/p}\left(
\int_{0}^{1}\left\vert f^{\prime }\left( \frac{ab}{A_{t}}\right) \right\vert
^{q}dt\right) ^{1/q}  \notag \\
&\leq &\frac{ab\left( b-a\right) }{2}\left( \int_{0}^{1}\frac{\left\vert
1-2t\right\vert ^{\theta p}}{A_{t}^{2p}}dt\right) ^{1/p}  \notag \\
&&\times \left( \int_{0}^{1}t^{\alpha }\left\vert f^{\prime }\left( a\right)
\right\vert ^{q}+m\left( 1-t^{\alpha }\right) \left\vert f^{\prime }\left(
b/m\right) \right\vert ^{q}dt\right) ^{1/q}  \notag \\
&=&\frac{ab\left( b-a\right) }{2}\left[ \int_{0}^{1/2}\frac{\left(
1-2t\right) ^{\theta p}}{A_{t}^{2p}}dt+\int_{1/2}^{1}\frac{\left(
2t-1\right) ^{\theta p}}{A_{t}^{2p}}dt\right] ^{1/p}  \notag \\
&&\times \left( \frac{\left\vert f^{\prime }\left( a\right) \right\vert
^{q}+m\alpha \left\vert f^{\prime }\left( b/m\right) \right\vert ^{q}}{%
\alpha +1}\right) ^{1/q}  \label{2.22}
\end{eqnarray}%
calculating following integrals, we have%
\begin{eqnarray}
\int_{0}^{1/2}\frac{\left( 1-2t\right) ^{\theta p}}{A_{t}^{2p}}dt &=&\frac{1%
}{2}\int_{0}^{1}\frac{\left( 1-u\right) ^{\theta p}}{\left( \frac{u}{2}%
b+\left( 1-\frac{u}{2}\right) a\right) ^{2p}}du  \notag \\
&=&\frac{\left( a+b\right) ^{-2p}}{2^{1-2p}}\int_{0}^{1}v^{\theta p}\left(
1-v\left( \frac{b-a}{b+a}\right) \right) ^{-2p}dv  \notag \\
&=&\frac{\left( a+b\right) ^{-2p}}{2^{1-2p}\left( \theta p+1\right) }%
\begin{array}{c}
_{2}F_{1}\left( 2p,\theta p+1;\theta p+2;\frac{b-a}{b+a}\right)%
\end{array}
\label{2.23}
\end{eqnarray}%
\begin{eqnarray}
\int_{1/2}^{1}\frac{\left( 2t-1\right) ^{\theta p}}{A_{t}^{2p}}dt &=&\frac{1%
}{2}\int_{1}^{2}\frac{\left( u-1\right) ^{\theta p}}{\left( \frac{u}{2}%
b+\left( 1-\frac{u}{2}\right) a\right) ^{2p}}du  \notag \\
&=&\frac{\left( a+b\right) ^{-2p}}{2^{1-2p}}\int_{0}^{1}v^{\theta p}\left(
1-v\left( \frac{a-b}{b+a}\right) \right) ^{-2p}dv  \notag \\
&=&\frac{\left( a+b\right) ^{-2p}}{2^{1-2p}\left( \theta p+1\right) }%
\begin{array}{c}
_{2}F_{1}\left( 2p,\theta p+1;\theta p+2;\frac{a-b}{b+a}\right)%
\end{array}
\label{2.24}
\end{eqnarray}%
Thus, if we use $\left( \ref{2.23}\right) $-$\left( \ref{2.24}\right) $ in $%
\left( \ref{2.22}\right) $ we get the inequality of $\ \left( \ref{2.21}%
\right) $ and this completesF the proof.
\end{proof}

\begin{corollary}
In Theorem 9,

\begin{enumerate}
\item[(a)] If we take $\alpha =1$, $m=1$ we have the following inequality
for harmonically convex functions:%
\begin{eqnarray*}
\left\vert I_{f}\left( g;\theta ,a,b\right) \right\vert &\leq &\frac{%
ab\left( b-a\right) }{2^{\left( 1/p\right) -1}\left( a+b\right) ^{2}}\left( 
\frac{1}{\theta p+1}\right) ^{1/p}\left( \frac{\left\vert f^{\prime }\left(
a\right) \right\vert ^{q}+\left\vert f^{\prime }\left( b\right) \right\vert
^{q}}{2}\right) ^{1/q} \\
&&\times \left[ 
\begin{array}{c}
\begin{array}{c}
_{2}F_{1}\left( 2p,\theta p+1;\theta p+2;\frac{b-a}{b+a}\right)%
\end{array}
\\ 
+%
\begin{array}{c}
_{2}F_{1}\left( 2p,\theta p+1;\theta p+2;\frac{a-b}{b+a}\right)%
\end{array}%
\end{array}%
\right] ^{1/p}\text{,}
\end{eqnarray*}

\item[(b)] If we take $\alpha =1$ we have the following inequality for
harmonically $m$-convex functions:%
\begin{eqnarray*}
\left\vert I_{f}\left( g;\theta ,a,b\right) \right\vert &\leq &\frac{%
ab\left( b-a\right) }{2^{\left( 1/p\right) -1}\left( a+b\right) ^{2}}\left( 
\frac{1}{\theta p+1}\right) ^{1/p}\left( \frac{\left\vert f^{\prime }\left(
a\right) \right\vert ^{q}+m\left\vert f^{\prime }\left( b/m\right)
\right\vert ^{q}}{2}\right) ^{1/q} \\
&&\times \left[ 
\begin{array}{c}
\begin{array}{c}
_{2}F_{1}\left( 2p,\theta p+1;\theta p+2;\frac{b-a}{b+a}\right)%
\end{array}
\\ 
+%
\begin{array}{c}
_{2}F_{1}\left( 2p,\theta p+1;\theta p+2;\frac{a-b}{b+a}\right)%
\end{array}%
\end{array}%
\right] ^{1/p}\text{,}
\end{eqnarray*}

\item[(c)] If we take $m=1$ we have the following inequality for
harmonically $\alpha $-convex functions:%
\begin{eqnarray*}
\left\vert I_{f}\left( g;\theta ,a,b\right) \right\vert &\leq &\frac{%
ab\left( b-a\right) }{2^{\left( 1/p\right) -1}\left( a+b\right) ^{2}}\left( 
\frac{1}{\theta p+1}\right) ^{1/p}\left( \frac{\left\vert f^{\prime }\left(
a\right) \right\vert ^{q}+\alpha \left\vert f^{\prime }\left( b\right)
\right\vert ^{q}}{\alpha +1}\right) ^{1/q} \\
&&\times \left[ 
\begin{array}{c}
\begin{array}{c}
_{2}F_{1}\left( 2p,\theta p+1;\theta p+2;\frac{b-a}{b+a}\right)%
\end{array}
\\ 
+%
\begin{array}{c}
_{2}F_{1}\left( 2p,\theta p+1;\theta p+2;\frac{a-b}{b+a}\right)%
\end{array}%
\end{array}%
\right] ^{1/p}\text{.}
\end{eqnarray*}
\end{enumerate}
\end{corollary}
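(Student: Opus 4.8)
The plan is to obtain all three inequalities as immediate specializations of Theorem 9, with no new computation required. As recorded in the note following the definition of harmonically $(\alpha,m)$-convex functions, a harmonically convex function is precisely a harmonically $(1,1)$-convex function, a harmonically $m$-convex function is a harmonically $(1,m)$-convex function, and a harmonically $\alpha$-convex function is a harmonically $(\alpha,1)$-convex function. Consequently, in each of the three cases the hypotheses of Theorem 9 are satisfied for the stated value of the pair $(\alpha,m)$, so inequality $(\ref{2.21})$ holds with that value inserted.

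The feature that makes each reduction trivial is that, in $(\ref{2.21})$, the constant prefactor $\frac{ab(b-a)}{2^{(1/p)-1}(a+b)^2}\left(\frac{1}{\theta p+1}\right)^{1/p}$ and the hypergeometric bracket $\left[{}_2F_1\left(2p,\theta p+1;\theta p+2;\frac{b-a}{b+a}\right)+{}_2F_1\left(2p,\theta p+1;\theta p+2;\frac{a-b}{b+a}\right)\right]^{1/p}$ are independent of $\alpha$ and $m$. The entire $\alpha,m$-dependence is carried by the single factor $\left(\frac{|f'(a)|^q+m\alpha|f'(b/m)|^q}{\alpha+1}\right)^{1/q}$, so for each part I would only need to evaluate this factor at the relevant parameters while copying the rest of $(\ref{2.21})$ verbatim.

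Accordingly, for part (a) I set $\alpha=1$ and $m=1$, so that $b/m=b$, $m\alpha=1$ and $\alpha+1=2$, which reduces the factor to $\left(\frac{|f'(a)|^q+|f'(b)|^q}{2}\right)^{1/q}$ and gives the first inequality. For part (b) I set $\alpha=1$ alone, giving $m\alpha=m$ and $\alpha+1=2$, hence the factor $\left(\frac{|f'(a)|^q+m|f'(b/m)|^q}{2}\right)^{1/q}$. For part (c) I set $m=1$ alone, giving $b/m=b$ and $m\alpha=\alpha$, hence $\left(\frac{|f'(a)|^q+\alpha|f'(b)|^q}{\alpha+1}\right)^{1/q}$. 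In each case the prefactor and the hypergeometric bracket are left unchanged, and the stated inequalities follow at once. There is no genuine obstacle here: the argument is a routine substitution, and the only point worth confirming is that the three named subclasses correspond to the indicated $(\alpha,m)$ values, which is exactly the content of the note cited above.
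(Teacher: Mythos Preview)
Your proposal is correct and matches the paper's approach: the corollary is stated there without proof, as an immediate specialization of Theorem 9 obtained by substituting the indicated values of $\alpha$ and $m$ into inequality (\ref{2.21}). Your observation that only the factor $\left(\frac{|f'(a)|^{q}+m\alpha|f'(b/m)|^{q}}{\alpha+1}\right)^{1/q}$ depends on $(\alpha,m)$ is exactly what makes the reduction routine.
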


\end{document}